\def\proof{{\boldmath $Proof.$}\hskip 0.3truecm}
\newtheorem{lm}{Lemma}[section]
\def\proof{{\it Proof.}\nobreak\\}%% proof
\def\qed{\hfill$\Box$ \bigskip}%%%%% end of proof
\newtheorem{tm}{Theorem}[section]
\newtheorem{pro}{Proposition}[section]
\newtheorem{co}{Corollary}[section]
\newtheorem{rem}[tm]{Remark}
\newtheorem{ex}[tm]{Example}
\newcommand{\subgp}[1]{\langle{#1}\rangle}
\begin{document}

\title[Rainbow eulerian multidigraphs and the product of cycles]{Rainbow eulerian multidigraphs and the product of cycles}

\author{S. C. L\'opez}
\address{%
Departament de Matem\`{a}tica Aplicada IV\\
Universitat Polit\`{e}cnica de Catalunya. BarcelonaTech\\
C/Esteve Terrades 5\\
08860 Castelldefels, Spain}
\email{susana@ma4.upc.edu}

\author{F. A. Muntaner-Batle}
\address{Graph Theory and Applications Research Group \\
 School of Electrical Engineering and Computer Science\\
Faculty of Engineering and Built Environment\\
The University of Newcastle\\
NSW 2308
Australia}
\email{famb1es@yahoo.es}
\dedicatory{This paper is dedicated to Miquel Rius-Font for his contribution to graph labelings.}
\date{\today}
\maketitle

\begin{abstract}
An arc colored eulerian multidigraph with $l$ colors is rainbow eulerian if there is an eulerian circuit in which a sequence of $l$ colors repeats. The digraph product that refers the title was introduced by Figueroa-Centeno et al. as follows: let $D$ be a digraph and let $\Gamma$ be a family of digraphs such that $V(F)=V$ for every $F\in \Gamma$. Consider any function $h:E(D)\longrightarrow\Gamma $.
Then the product $D\otimes_{h} \Gamma$ is the digraph with vertex set $V(D)\times V$ and
$((a,x),(b,y))\in E(D\otimes_{h}\Gamma)$ if and only if $ (a,b)\in E(D)$ and $ (x,y)\in E(h (a,b))$.

In this paper we use rainbow eulerian multidigraphs and permutations as a way to characterize the $\otimes_h$-product of oriented cycles. We study the behavior of the $\otimes_h$-product when applied to digraphs with unicyclic components. The results obtained allow us to get edge-magic labelings of graphs formed by the union of unicyclic components and with different magic sums.

\end{abstract}

{\bf Keywords:} rainbow eulerian multidigraph, eulerian multidigraph, direct product, $\otimes_h$-product, (super) edge-magic. \newline \textbf{MSC:} 05C76 and 05C78.
\section{Introduction}

For the undefined concepts appearing in this paper, we refer the reader to \cite{W}. We denote by $C_n^+$ and by $C_n^-$ the two possible strong orientations of the cycle
$C_n$ and by $\overrightarrow{G}$ any orientation of a graph $G$.
Let $D$ be a digraph, we denote by $D^-$ the reverse of $D$, that is, the digraph obtained from $D$ by reversing all its arcs. According to this notation, it is clear that  $({C_n^+})^-=C_n^-$.

An old result of Good (see for instance, \cite{RobTes05}) states that a weakly connected multidigraph $M$ has an eulerian circuit if and only if, for every vertex, indegree equals outdegree. Let $M$ be an arc labeled eulerian multidigraph with $l$ colors. We say that $M$ is {\it rainbow eulerian} if it has an eulerian circuit in which a sequence of $l$ colors repeats. Similarly, a {\it rainbow circuit} in $M$ is a circuit in $M$ in which a sequence of $l$ colors repeats.

Figueroa-Centeno et al. introduced in \cite{F1} the following generalization of the classical direct product for digraphs. Let $D$ be a digraph and let $\Gamma $ be a
family of digraphs such that $V(F)=V$, for every $F\in\Gamma$. Consider any function $h:E(D)\longrightarrow\Gamma $.
Then the product $D\otimes_{h} \Gamma$ is the digraph with vertex set $V(D)\times V$ and $((a,x),(b,y))\in E(D\otimes_{h
    }\Gamma)$ if and only if $(a,b)\in E(D)$ and $(x,y)\in
    E(h (a,b))$.
The adjacency matrix of $D\otimes_{h} \Gamma$ is obtained by multiplying every $0$ entry of $A(D)$, the adjacency matrix of $D$, by the $|V|\times |V|$ null matrix and every $1$ entry of $A(D)$ by $A(h(a,b))$, where $(a,b)$ is the arc related to the corresponding $1$ entry. Notice that when $h$ is constant, the adjacency matrix of $D\otimes_{h} \Gamma$ is just the classical Kronecker product $A(D)\otimes A(h(a,b))$. When $|\Gamma |=1$, we just write $D\otimes\Gamma$.

A known result in the area (see for instance, \cite{HamImrKlav11}) is that the direct product of two strongly oriented cycles produces copies of a strongly oriented cycle, namely,
\begin{equation}\label{producte_directe_strong_cicles}
    C_m^+ \otimes C_n^+\cong \hbox{gcd}(m,n)C_{\hbox{lcm}(m,n)}^+.
\end{equation}

An extension of (\ref{producte_directe_strong_cicles}) was obtained by Ahmad et al. in \cite{AMR}.

\begin{tm}\cite{AMR}\label{cicleALI}
Let $m, n\in \mathbb{N}$ and consider the product $C_m^+ \otimes_h \{{C_n^+, C_n^- }\}$ where $h:
E(C_m^+)\longrightarrow \{{C_n^+, C_n^- }\}$. Let $g$ be a generator of a cyclic subgroup of
$\mathbb{Z}_n$, namely $\subgp{g}$, such that $|\subgp{g}|=k$. Also let $r<m$ be a positive integer that satisfies the following congruence relation
$$ m-2r\equiv g\ (mod\,\ n).$$

 If the function $h$ assigns   $C_n^-$ to exactly $r$ arcs of $C_m^+$ then the product
$$C_m^+ \otimes_h \{{C_n^+, C_n^- }\}
$$
consists of exactly $n/k$ disjoint copies of a strongly oriented cycle $C_{mk}^+$. In particular if gcd$(g,n)=1$, then
$\subgp{g}=\mathbb{Z}_n$ and if the function $h$ assigns $C_n^-$ to exactly $r$ arcs of $C_m^+$
then $$ C_m^+ \otimes_h \{{C_n^+, C_n^- }\}\cong C_{mn}^+.$$
\end{tm}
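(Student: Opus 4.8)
The plan is to analyse $C_m^+\otimes_h\{C_n^+,C_n^-\}$ directly as a functional digraph. First I would fix coordinates: identify $V(C_m^+)$ with $\mathbb{Z}_m$, so that $E(C_m^+)=\{(i,i+1):i\in\mathbb{Z}_m\}$, and identify $V(C_n^+)=V(C_n^-)=\mathbb{Z}_n$, so that the arcs of $C_n^+$ are the pairs $(j,j+1)$ and those of $C_n^-$ are the pairs $(j,j-1)$. For $i\in\mathbb{Z}_m$ put $\epsilon_i=+1$ if $h(i,i+1)=C_n^+$ and $\epsilon_i=-1$ if $h(i,i+1)=C_n^-$. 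By the definition of the $\otimes_h$-product, the unique arc leaving a vertex $(a,x)\in\mathbb{Z}_m\times\mathbb{Z}_n$ is $\bigl((a,x),(a+1,\,x+\epsilon_a)\bigr)$, and dually every vertex has exactly one in-arc. Hence $C_m^+\otimes_h\{C_n^+,C_n^-\}$ is $1$-regular both in in-degree and in out-degree, so it is a disjoint union of directed cycles, and a directed cycle on $\ell$ vertices is isomorphic to $C_\ell^+$.

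Next I would measure how the second coordinate drifts along one lap around $C_m^+$. Starting from $(0,x)$ and applying the successor map $m$ times brings the first coordinate back to $0$ and increases the second coordinate by $\sum_{i=0}^{m-1}\epsilon_i$. Since $h$ assigns $C_n^-$ to exactly $r$ arcs, this sum equals $(m-r)-r=m-2r$, which by hypothesis is $\equiv g\pmod n$; note that this value depends only on the number $r$ of reversed arcs, not on \emph{which} arcs are reversed. Iterating, after $jm$ steps we reach $(0,\,x+jg)$. The first coordinate returns to $0$ only at step counts divisible by $m$, so the orbit of $(0,x)$ closes exactly at the smallest $j\ge 1$ with $jg\equiv 0\pmod n$, that is, at $j$ equal to the order of $g$ in $\mathbb{Z}_n$, which is $k=|\langle g\rangle|$. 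Therefore each component is a directed cycle of length $mk$, i.e.\ a copy of $C_{mk}^+$; since the product has $mn$ vertices, the number of components is $mn/(mk)=n/k$.

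The ``in particular'' statement then follows at once: if $\gcd(g,n)=1$ then $g$ generates $\mathbb{Z}_n$, so $\langle g\rangle=\mathbb{Z}_n$ and $k=n$, whence there is a single component, $C_{mn}^+$. I expect the one point that needs genuine care to be the length computation in the second step: one must rule out the orbit closing before $mk$ steps, and this is precisely where the two observations ``the first coordinate only returns after a multiple of $m$ steps'' and ``$g$ has order $k$ in $\mathbb{Z}_n$'' must be combined. Everything else is bookkeeping.
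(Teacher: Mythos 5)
Your proof is correct, and it takes a genuinely different route from the paper. The paper does not prove this statement from first principles at all: it is quoted from the reference [AMR], and within the paper it is only re-derived (in the Remark following Theorem \ref{Ahmad_extension_permutations}) as a special case of the general permutation characterization, by observing that $P_h$ is a product of $m$ factors $\sigma^{\pm 1}$ (with $\sigma$ the $n$-cycle), hence $P_h=\sigma^{m-2r}$, whose disjoint cyclic decomposition consists of $n/k$ cycles of length $k$; Theorem \ref{Ahmad_extension_permutations} then multiplies lengths by $m$. Your argument is the concrete, coordinate-level version of the same computation, carried out directly on the product digraph: you verify $1$-regularity, track the drift $x\mapsto x+\epsilon_a$ of the second coordinate, note that one lap around $C_m^+$ shifts it by exactly $m-2r\equiv g\pmod n$, and close the orbit at the first $j$ with $jg\equiv 0\pmod n$, i.e.\ $j=k$. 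What your approach buys is a short, self-contained, elementary proof that needs neither the rainbow eulerian machinery nor the general theorem on $P_h$, and it handles the sign of $m-2r$ uniformly (the paper's remark splits off the case $m<2r$ as ``similar''). What the paper's approach buys is generality: the same framework covers arbitrary families $\Gamma$ of $1$-regular digraphs, of which this theorem is the special case $\Gamma=\{C_n^+,C_n^-\}$. You also correctly isolate the one delicate point, namely ruling out early closure of the orbit, and your combination of ``first coordinate returns only at multiples of $m$'' with ``$g$ has order $k$'' settles it.
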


In this paper, we study $C_m^+\otimes_h\Gamma$, where $\Gamma$ is a family of $1$-regular digraphs. We characterize this product in terms of rainbow eulerian multidigraphs (Theorem \ref{Ahmad_extension}), which leads us to a further characterization in terms of permutations (Theorem \ref{Ahmad_extension_permutations}). This is the content of Section 2. Section 3 is focused on applications to the product of unicyclic graphs and of graphs whose connected components are unicyclic graphs (Theorems \ref{producte_uni}, \ref{producte_uni_amb_permutacions} and \ref{th_charact_decom_suma_unicyclics}). Finally, in Section 3, we obtain (super) edge-magic labelings of graphs that are union of unicyclic graphs. One of them (Theorem \ref{generalizing_McQ_tm}) generalizes a previous result found in \cite{McQ02}. We also construct families of graphs with an increasing number of possible magic-sums (Theorem \ref{versio_complerta_i_magic_sums}).

\section{Arc colored eulerian multidigraphs obtained from the product}
Let $\Gamma$ be a family of $1$-regular digraphs such that $V(F)=V$, for every $F\in \Gamma$. Consider any function $h:E(C_m^+)\longrightarrow\Gamma $. We denote by $M_h$ the eulerian multidigraph with vertex set $V(M_h)=V$ and arc set $E(M_h)=\cup_{e\in E(C_m^+)}E(h(e))$, including repetitions, that is, if $(x,y)$ is an arc in $r$ different digraphs $h(e)$, $e\in C_m^+$, then $(x,y)$ appears in $M_h$ with multiplicity $r$. An example is shown in Figure 1.

\begin{figure}[ht]
\begin{center}
  \includegraphics[width=150pt]{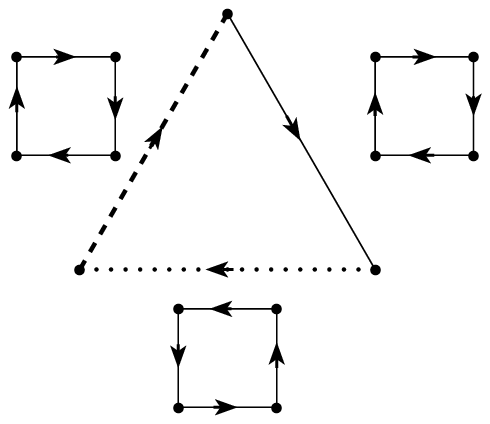}\hspace{1cm}\includegraphics[width=124pt]{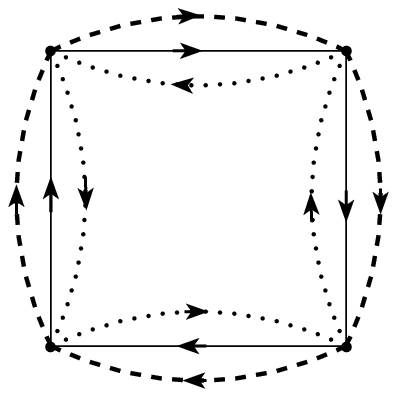}\\
\caption{A function $h:E(C_3^+)\longrightarrow\Gamma $ and the multidigraph $M_h$.}
  \label{Fig1}
  \end{center}
\end{figure}

Assume that we color the arcs of $C_m^+$ using a set of $m$ different colors $\{s_1,s_2,\ldots, s_m\}$, in such a way that, we start by assigning color $s_1$ to some particular arc, and then, we apply the next rule: if the arc $e$ has color $s_i$ assigned then arc $e'$ receives color $s_{i+1}$, where the head of $e$ is the tail of $e'$. We proceed in this way, until all arcs have been colored. We will refer to that coloring as a coloring with color sequence $(s_1,s_2,\ldots, s_m)$. Thus, we obtain an induced arc coloring of the eulerian multidigraph $M_h$, in which the arcs related to $h(e)$ receive the color of $e$, for each $e\in E(C_m^+)$. Figure 1 shows the coloring of the multidigraph $M_h$ induced by a coloring of $C_3^+$ with color sequence (dash, line, dots).

\begin{tm}\label{main}
Let $\Gamma$ be a family of $1$-regular digraphs such that $V(F)=V$ for every $F\in \Gamma$, and $n=|V|$. Consider any function $h:E(C_m^+)\longrightarrow\Gamma $. Then, the digraph $C_m^+\otimes_h\Gamma$ is strongly connected, that is, is a strongly oriented cycle of length $mn$, if and only if, the eulerian multidigraph $M_h$ is rainbow eulerian with color sequence $(s_1,s_2,\ldots, s_m)$, when we consider the arc coloring of $M_h$ induced by a coloring of $C_m^+$ with color sequence $(s_1,s_2,\ldots, s_m)$.
\end{tm}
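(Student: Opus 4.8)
The plan is to establish an explicit correspondence (a ``projection'' and a ``lift'') between directed walks in $C_m^+\otimes_h\Gamma$ and rainbow walks in $M_h$, and then to show that under it the spanning directed cycles of the product correspond exactly to the rainbow eulerian circuits of $M_h$. Write $V(C_m^+)=\{0,1,\dots,m-1\}$ with arcs $e_i=(i,i+1)$ (subscripts mod $m$), and fix the coloring of $C_m^+$ assigning $s_{i+1}$ to $e_i$, so that in the induced coloring every arc of $h(e_i)$ carries the color $s_{i+1}$. First I would record two easy structural facts. Since each member of $\Gamma$ is $1$-regular, the product $C_m^+\otimes_h\Gamma$ is $1$-regular on its $mn$ vertices, hence a vertex-disjoint union of directed cycles whose lengths sum to $mn$; thus it is strongly connected if and only if it is a single directed cycle of length $mn$. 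On the other side, every vertex of $M_h$ has in-degree and out-degree equal to $m$, and $M_h$ has exactly $mn$ arcs counted with multiplicity, namely $n$ of each of the $m$ colors.

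The core of the argument is the following dictionary. Any walk $(j_0,x_0)\to(j_1,x_1)\to\cdots$ in the product has $j_{t+1}\equiv j_t+1\pmod m$ and $(x_t,x_{t+1})\in E(h(e_{j_t}))$, so projecting onto the second coordinate yields a walk $x_0\to x_1\to\cdots$ in $M_h$ whose consecutive arcs carry the colors $s_{j_0+1},s_{j_0+2},\dots$ in cyclic order, i.e.\ a rainbow walk; conversely, because each $h(e_i)$ is a permutation, a rainbow walk in $M_h$ whose first arc has color $s_1$ lifts uniquely to a walk in the product starting in layer $0$, and the two operations are mutually inverse and carry closed walks to closed walks. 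I would then prove the key equivalence: along such a closed walk, two steps $t<t'$ land on the same product vertex exactly when $x_t=x_{t'}$ and $t\equiv t'\pmod m$, and --- since within a single color class an arc of $M_h$ is determined by its tail --- this is precisely the condition for the arcs leaving those two positions to be the \emph{same} arc of $M_h$. Consequently a closed walk of length $mn$ in the product is a directed cycle (no repeated vertex) if and only if its projection is a closed rainbow walk of length $mn$ using $mn$ distinct arcs of $M_h$, i.e.\ an eulerian circuit.

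Both implications then follow. If $C_m^+\otimes_h\Gamma$ is strongly connected it is a directed $mn$-cycle; cyclically start it at a layer-$0$ vertex --- one exists and, since layers increase by $1$ mod $m$ along the cycle, the $n$ layer-$0$ vertices are exactly those visited at steps $0,m,\dots,(n-1)m$ --- and project to $M_h$ to obtain, by the dictionary, a rainbow eulerian circuit with color sequence $(s_1,\dots,s_m)$. Conversely, rotate a rainbow eulerian circuit of $M_h$ so that it begins with a color-$s_1$ arc and lift it to a closed walk of length $mn=|V(C_m^+\otimes_h\Gamma)|$ in the product; by the key equivalence it has no repeated vertex, hence is a spanning directed cycle, so the product is strongly connected. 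The step I expect to be the main obstacle is making the bookkeeping of the dictionary watertight: keeping the ``layer'' coordinate and the ``color phase'' consistent, distinguishing reductions mod $m$ from reductions mod $mn$, and checking the borderline case of two indices differing by a multiple of $m$ but not of $mn$, so that ``repeated vertex of the product'' and ``repeated arc of $M_h$'' really do coincide. Once that dictionary is nailed down everything else is routine; in fact the permutation reformulation in the next theorem then drops out by taking $\sigma=h(e_{m-1})\circ\cdots\circ h(e_0)$ as the first-return map on layer $0$.
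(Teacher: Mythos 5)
Your proposal is correct and follows essentially the same route as the paper's own proof: project a spanning directed cycle of $C_m^+\otimes_h\Gamma$ via the second coordinate to obtain a rainbow eulerian circuit of $M_h$, and conversely lift a rainbow eulerian circuit starting with a color-$s_1$ arc by attaching the layer coordinate $v_r=(a_i,x_r)$, $r\equiv i \pmod m$. Your explicit ``key equivalence'' (repeated product vertex $\Leftrightarrow$ repeated arc of $M_h$, using that within a color class an arc is determined by its tail) just makes watertight the arc-distinctness and vertex-distinctness bookkeeping that the paper's proof leaves implicit.
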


\proof  By definition of the arc set, the digraph $C_m^+\otimes_h\Gamma$ is $1$-regular. Let $V(C_m^+)=\{a_1,a_2,\ldots, a_m\}$ with $E(C_m^+)=\{(a_i,a_{i+1})\}_{i=1}^{m-1}\cup \{(a_m,a_1)\}$, and with the arc $(a_i,a_{i+1})$ colored  $s_i$, for $i=1,2,\ldots, m-1$, and with the arc $(a_m,a_{1})$ colored  $s_m$.
Assume first that $C_m^+\otimes_h\Gamma$ is a strongly oriented cycle of length $mn$. Let $v_1v_2\ldots v_{mn}$ be a hamiltonian path in $C_m^+\otimes_h\Gamma$. Without loss of generality assume that $\pi_1(v_1)=a_1$, where $\pi_1:V(C_m^+\otimes_h\Gamma)\rightarrow V(C_m^+)$ is the natural projection $\pi_1(a,x)=a$. By definition of the $\otimes_h$-product, we have that $\pi_1(v_r)=a_i$ if and only if $r\equiv i$ (mod $m$). Thus, for every $r$ and every $i$ with, $1\le r\le mn-1$, $1\le i\le m$ and $r\equiv i$ (mod $m$), the arc $(v_r,v_{r+1})$ is related to the arc $(a_i,a_{i+1})\in E(C_m^+)$ and an arc $(x,y)\in h(a_i,a_{i+1})$, where if $i=m$, the expression  $(a_i,a_{i+1})$ should be replaced by $(a_m,a_1)$. Since every arc $h(a_i,a_{i+1})$ receives color $s_i$, we have that the arc $(v_r,v_{r+1})$ is related to the arc $(x,y)$ of $M_h$ colored with $s_i$. Therefore, the multidigraph $M_h$ contains a rainbow eulerian circuit, namely $\pi_2(v_1),\pi_2(v_2),\ldots, \pi_2(v_{mn})$ where $\pi_2(a,x)=x$, with color sequence $(s_1,s_2,\ldots, s_m)$. The converse is similar. Assume that $x_1x_2\ldots x_{mn}x_1$ is a rainbow eulerian circuit with color sequence $(s_1,s_2,\ldots,s_m)$. Without loss of generality, assume that $x_1x_2$ is colored using color $s_1$. Then, $x_rx_{r+1}\in E(h(a_ia_{i+1}))$, for every $r\equiv i$ (mod $m$). Hence, if we let $v_r=(a_i,x_r)$, where $ i\equiv r$ (mod $m$), we get a hamiltonian cycle $v_1v_2\ldots v_{mn}v_1$ in $C_m^+\otimes_h\Gamma$.
\qed

For instance, the digraph $C_3^+\otimes_h\Gamma$, where $h$ is the function defined in Figure 1, is a strongly oriented cycle of length $12$, since, as Figure 2 shows, we can find a rainbow eulerian circuit with color sequence (dash, line, dots). The arc labels denote the place that each arc occupies in the rainbow eulerian circuit when we start to follow the circuit with the arc labeled $1$.

\begin{figure}[ht]
\begin{center}
  \includegraphics[width=124pt]{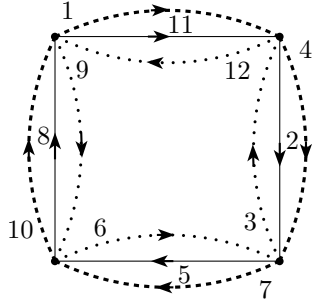}\\
\caption{A rainbow eulerian circuit of $M_h$ with color sequence (dash, line, dots).}
  \label{Fig2}
  \end{center}
\end{figure}

From the proof of Theorem \ref{main} we also obtain the next result, which is a generalization of Theorem \ref{cicleALI} in terms of the existence of rainbow circuits.

\begin{tm}\label{Ahmad_extension}
Let $\Gamma$ be a family of $1$-regular digraphs such that $V(F)=V$ for every $F\in \Gamma$. Consider any function $h:E(C_m^+)\longrightarrow\Gamma $. Then, every rainbow circuit in $M_h$ with color sequence $(s_1,s_2,\ldots, s_m)$ corresponds to a strongly connected component of $C_m^+\otimes_h\Gamma$, when we consider the coloring of $M_h$ induced by a coloring of $C_m^+$ with color sequence $(s_1,s_2,\ldots, s_m)$.
\end{tm}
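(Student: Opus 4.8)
The plan is to reduce Theorem~\ref{Ahmad_extension} to Theorem~\ref{main} by passing to the sub-multidigraph carried by a fixed rainbow circuit. First I would fix a rainbow circuit $\mathcal{C}$ in $M_h$ with color sequence $(s_1,s_2,\ldots,s_m)$, say of combinatorial length $mt$ for some $t\le n$, and let $N$ denote the sub-multidigraph of $M_h$ consisting exactly of the arcs traversed by $\mathcal{C}$ (with multiplicities equal to the number of times $\mathcal{C}$ uses each arc) together with the vertices they meet. Since $\mathcal{C}$ is a closed walk that uses every arc of $N$ exactly once, $N$ is a weakly connected eulerian multidigraph on some vertex set $V'\subseteq V$, and $\mathcal{C}$ itself is a rainbow eulerian circuit of $N$ with the same color sequence $(s_1,\ldots,s_m)$.

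Next I would reconstruct $N$ as an $M_{h'}$ for a suitable family and function. Because the coloring of $M_h$ was induced by the color sequence coloring of $C_m^+$, the color-$s_i$ arcs of $N$ are precisely those arcs of $N$ coming from $h(a_i,a_{i+1})$; grouping the arcs of $N$ by color gives, for each $i$, a set $S_i$ of arcs on $V'$ all coloured $s_i$. The key observation is that in a rainbow circuit the colors cycle through $s_1,s_2,\ldots,s_m,s_1,\ldots$ in order, and at each vertex of $N$ indegree equals outdegree (eulerian) while the traversal alternates the colors in a fixed cyclic pattern; this forces each $S_i$ to be a $1$-regular digraph on $V'$. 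Defining $\Gamma'=\{h'(e): e\in E(C_m^+)\}$ with $h'(a_i,a_{i+1})$ equal to the $1$-regular digraph $S_i$ on $V'$, we get $M_{h'}=N$, and the induced coloring of $M_{h'}$ from the color sequence $(s_1,\ldots,s_m)$ agrees with the coloring inherited from $M_h$. Now $N=M_{h'}$ is rainbow eulerian with color sequence $(s_1,\ldots,s_m)$, so Theorem~\ref{main} applies and yields that $C_m^+\otimes_{h'}\Gamma'$ is a strongly oriented cycle of length $m|V'|$.

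Finally I would identify $C_m^+\otimes_{h'}\Gamma'$ with a subdigraph of $C_m^+\otimes_h\Gamma$ and check it is exactly a connected component. The inclusion $V'\subseteq V$ and $S_i\subseteq h(a_i,a_{i+1})$ give a natural injection of arc sets, embedding $C_m^+\otimes_{h'}\Gamma'$ into $C_m^+\otimes_h\Gamma$ as a subdigraph; since both digraphs are $1$-regular (the target by the first line of the proof of Theorem~\ref{main}, the source as a disjoint union of cycles), a strongly connected $1$-regular subdigraph of a $1$-regular digraph must be a whole connected component. Conversely, running the argument in reverse, any strongly connected component of $C_m^+\otimes_h\Gamma$ is itself a strongly oriented cycle whose image under $\pi_2$ is a rainbow circuit of $M_h$ with color sequence $(s_1,\ldots,s_m)$, exactly as in the forward direction of Theorem~\ref{main}; this gives the correspondence in both directions.

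The main obstacle I anticipate is the middle step: verifying rigorously that grouping the arcs of a rainbow circuit by color produces genuinely $1$-regular pieces $S_i$ on $V'$, i.e.\ that no vertex of $V'$ is missed by some color class and that no color class repeats an out-arc at a vertex. This is where the interplay between the eulerian (indegree $=$ outdegree) condition and the rigid cyclic order of colors along $\mathcal{C}$ has to be used carefully; once that is in hand, the rest is a routine translation through Theorem~\ref{main} and the $1$-regularity bookkeeping.
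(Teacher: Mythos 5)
There is a genuine gap, and it sits exactly at the step you flagged as the ``main obstacle'': the color classes of a rainbow circuit are \emph{not} in general $1$-regular digraphs on the vertex set $V'$ of the circuit, so the reduction to Theorem~\ref{main} collapses. Along a rainbow circuit of length $mt$ the vertices occurring at positions congruent to $i$ (mod $m$) form a set $V_i$ of size $t$, and the $s_i$-colored arcs of the circuit form a bijection from $V_i$ onto $V_{i+1}$; the sets $V_1,\ldots,V_m$ need not coincide, so the family $\Gamma'=\{S_i\}$ is not a family of $1$-regular digraphs on a common vertex set and Theorem~\ref{main} simply does not apply to $(V',\Gamma',h')$. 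Concretely, take $m=3$, $V=\{1,2,3,4\}$, and let $h$ send the three arcs of $C_3^+$ to the permutation digraphs $\pi_1=(1\,2)(3\,4)$, $\pi_2=(2\,3)(1\,4)$, $\pi_3=(1\,2)(3\,4)$ (each a $1$-regular digraph on $V$). Then $1\xrightarrow{s_1}2\xrightarrow{s_2}3\xrightarrow{s_3}4\xrightarrow{s_1}3\xrightarrow{s_2}2\xrightarrow{s_3}1$ is a rainbow circuit of length $6$ with $V'=\{1,2,3,4\}$, but $S_1=\{(1,2),(4,3)\}$ gives vertices $2$ and $3$ no outgoing $s_1$-arc, so $S_1$ is not $1$-regular on $V'$. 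Your conclusion is also quantitatively wrong: you claim the corresponding component is a cycle of length $m|V'|=12$, whereas here $P_h=\pi_3\pi_2\pi_1=(1\,4)(2\,3)$, so $C_3^+\otimes_h\Gamma$ consists of two components of length $6$ and has no component of length $12$; the correct length is $mt$, the length of the circuit itself, not $m|V'|$.

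The paper does not route the argument through a restricted instance of Theorem~\ref{main} at all: it reuses the explicit correspondence established inside the proof of that theorem. Given a rainbow circuit $x_1x_2\ldots x_{mt}x_1$ with color sequence $(s_1,\ldots,s_m)$, one sets $v_r=(a_i,x_r)$ with $i\equiv r$ (mod $m$) and checks directly that $v_1v_2\ldots v_{mt}v_1$ is a cycle in $C_m^+\otimes_h\Gamma$; since the product is $1$-regular, any such cycle is automatically an entire strongly connected component, and conversely every strongly connected component is a cycle whose projection $\pi_2$ yields a rainbow circuit with the prescribed color sequence (this converse part of your last paragraph is fine). If you want to keep your packaging strategy, you would have to replace the single vertex set $V'$ by the sequence $V_1,\ldots,V_m$ and work with bijections $V_i\to V_{i+1}$ rather than with a family on a common vertex set, at which point you are essentially rewriting the direct argument anyway.
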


\begin{rem}
Let $h,h':E(C_m^+)\rightarrow \Gamma$ be two functions with Im $h=$ Im $h'$, when  Im $h$ and Im $h'$ are considered to be multisets. The next example shows that the relation $C_m^+\otimes_h\Gamma\cong C_m^+\otimes_{h'}\Gamma$ does not hold, in general.
\end{rem}

\begin{ex}\label{Exemple_h_h'}
Let $\Gamma=\{F_i\}_{i=1}^3\cup\{F^-_1\}$, where $F_1,F_2$ and $F_3$ are the digraphs that appear in Figure \ref{Fig3}. Assume that $E(C_4^+)=\{e_i\}_{i=1}^4$ and that the head of $e_i$ is the tail of $e_{i+1}$, for $i=1,2,3$. Consider the functions $h,h':E(C_4^+)\rightarrow \Gamma$ defined by:
$h(e_1)=h'(e_1)=F_1; h(e_2)=h'(e_4)=F_2; h(e_3)=h'(e_3)=F_1^-; h(e_4)=h'(e_2)=F_3.$
Then,
$$C_4^+\otimes_h\Gamma\cong C_{16}^++ C_8^+\hspace{0.5cm} \hbox{and}\hspace{0.5cm} C_4^+\otimes_{h'}\Gamma\cong C_4^++ C_{20}^+.$$

\begin{figure}[ht]
\begin{center}
  \includegraphics[width=316pt]{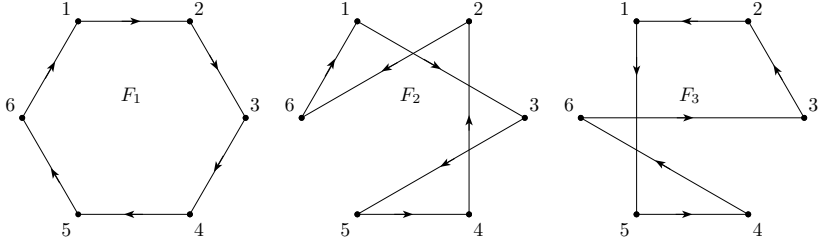}\\
\caption{The digraphs $F_1$, $F_2$ and $F_3$.}
  \label{Fig3}
  \end{center}
\end{figure}
\end{ex}

%Theorem \ref{strong_components} still holds if instead of a family $\Gamma$ of strongly oriented cycles, we consider a family of $1$-regular digraphs. If that is the case, we also denote by $M_h$ the eulerian multidigraph with vertex set $V(M_h)=V$ and arc set $E(M_h)=\cup_{e\in E(C_m^+)}E(h(e))$, including repetitions.

%\begin{tm}\label{Ahmad_extension}
%Let $\Gamma$ be a family of $1$-regular digraphs such that $V(F)=V$ for every $F\in \Gamma$. Consider any function $h:E(C_m^+)\longrightarrow\Gamma $. Then, every rainbow  circuit in $M_h$ with color sequence $(s_1,s_2,\ldots, s_m)$ corresponds to a strong connected component of $C_m^+\otimes_h\Gamma$, when we consider the coloring of $M_h$ induced by a coloring of $C_m^+$ with color sequence $(s_1,s_2,\ldots, s_m)$.
%\end{tm}

%\proof

%\qed

The next lemma, although it is fairly simple, will prove to be useful in order to obtain Corollary \ref{co_weakly_coonected_components}.

\begin{lm}\label{reverse}
Let $K_2^+$ and $K_2^-$ be the two possible orientations of $K_2$. Then, the reverse of $K_2^+\otimes C_m^+$ is the digraph $K_2^-\otimes C_m^-$. In particular,
$$\hbox{und}(K_2^+\otimes C_m^+)\cong\hbox{und}(K_2^-\otimes C_m^-).$$
\end{lm}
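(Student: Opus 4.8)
The plan is to deduce the lemma from a single general observation: reversal distributes over the direct product, i.e.\ $(D_1\otimes D_2)^-=D_1^-\otimes D_2^-$ for any two digraphs $D_1,D_2$. Once this is in hand, the first assertion follows by specialising to $D_1=K_2^+$ and $D_2=C_m^+$, using $(K_2^+)^-=K_2^-$ and the identity $(C_m^+)^-=C_m^-$ already recorded in the introduction.

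To prove the general identity I would argue directly from the definition of the $\otimes$-product (with constant $h$, so that it is the classical direct product). An arc $((a,x),(b,y))$ lies in $E(D_1\otimes D_2)$ if and only if $(a,b)\in E(D_1)$ and $(x,y)\in E(D_2)$. Reversing this arc produces $((b,y),(a,x))$, and by definition of the reverse digraph we have $(a,b)\in E(D_1)\iff (b,a)\in E(D_1^-)$ and $(x,y)\in E(D_2)\iff (y,x)\in E(D_2^-)$; hence $((b,y),(a,x))\in E(D_1^-\otimes D_2^-)$. Thus the arc-reversal map sends $E(D_1\otimes D_2)$ bijectively onto $E(D_1^-\otimes D_2^-)$, and since both digraphs share the vertex set $V(D_1)\times V(D_2)$ this is precisely the statement $(D_1\otimes D_2)^-=D_1^-\otimes D_2^-$. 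Applying it with $D_1=K_2^+$, $D_2=C_m^+$ gives $(K_2^+\otimes C_m^+)^-=K_2^-\otimes C_m^-$.

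For the ``in particular'' clause I would invoke the elementary fact that taking the underlying undirected graph is insensitive to reversing arcs: $\hbox{und}(D)=\hbox{und}(D^-)$ for every digraph $D$, because an edge $\{u,v\}$ of $\hbox{und}(D)$ merely records that $(u,v)$ or $(v,u)$ is an arc of $D$. Taking $D=K_2^+\otimes C_m^+$ and combining with the previous paragraph yields $\hbox{und}(K_2^+\otimes C_m^+)=\hbox{und}(K_2^-\otimes C_m^-)$, in particular an isomorphism.

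There is essentially no obstacle here; the statement is a definition-chase. The only points requiring a little care are the bookkeeping of which coordinate is reversed in each factor, and the remark that the arc-reversal bijection actually gives the \emph{equality} $(D_1\otimes D_2)^-=D_1^-\otimes D_2^-$ on the same vertex set (not merely an isomorphism), which is what makes the passage to underlying graphs immediate.
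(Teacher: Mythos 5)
Your proposal is correct and follows essentially the same argument as the paper: the paper performs the identical definition-chase (an arc of the reverse corresponds to reversing both coordinates) directly for $K_2^+\otimes C_m^+$, whereas you merely state it first as the general identity $(D_1\otimes D_2)^-=D_1^-\otimes D_2^-$ and then specialise. The handling of the ``in particular'' clause via $\hbox{und}(D)=\hbox{und}(D^-)$ matches the paper's (unspelled) reasoning as well.
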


\proof
Let $D=(K_2^+\otimes C_m^+)^-$. By definition $V(D)=V(K_2^+)\times V(C_m^+)$, that is, $V(D)=V(K_2^-)\times V(C_m^-)$, and $((a,x),(b,y))\in E(D)$ if and only if $((b,y),(a,x))\in E(K_2^+\otimes C_m^+)$. Thus, $((a,x),(b,y))\in E(D)$ if and only if, $(b,a)\in E(K_2^+)$ and $(y,x)\in V(C_m^+)$, that is, if and only if,
 $(a,b)\in E(K_2^-)$ and $(x,y)\in V(C_m^-)$. Hence, $((a,x),(b,y))\in E(D)$ if and only if, $((a,x),(b,y))\in E(K_2^-\otimes C_m^-)$. This proves the main statement. Therefore, the final statement trivially holds.
\qed

\begin{co}\label{co_weakly_coonected_components}
Let $\overrightarrow{C}_m$ be any orientation of a cycle $C_m$, and let $\Gamma$ be a family of $1$-regular digraphs such that $V(F)=V$ for every $F\in \Gamma$. Assume that $h:E(\overrightarrow{C}_m)\rightarrow \Gamma$ is any function. We consider the function $h^*:E(C_m^+)\rightarrow \Gamma\cup \Gamma^-$, where $\Gamma^-=\{F:\ F^-\in \Gamma\}$, defined by:
$h^*(x,y)=h(x,y)$ if $(x,y)\in E(\overrightarrow{C}_m)$, or,  $h^*(x,y)=h(x,y)^-$ if $(y,x)\in E(\overrightarrow{C}_m)$.

Then, every rainbow  circuit in $M_{h^*}$ with color sequence $(s_1,s_2,\ldots, s_m)$ corresponds to a weakly connected component of $\overrightarrow{C}_m\otimes_h\Gamma$, when we consider the coloring of $M_{h^*}$ induced by a coloring of $C_m^+$ with color sequence $(s_1,s_2,\ldots, s_m)$ .
\end{co}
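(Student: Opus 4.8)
The plan is to reduce the statement about an arbitrary orientation $\overrightarrow{C}_m$ to the already-proved case of the standard orientation $C_m^+$ via Lemma \ref{reverse}, applied arc by arc. First I would observe that the underlying undirected graph of $\overrightarrow{C}_m\otimes_h\Gamma$ is unchanged if we reverse any single arc $(y,x)\in E(\overrightarrow{C}_m)$ while simultaneously replacing $h(y,x)$ by its reverse $h(y,x)^-$: indeed, locally around that arc the product looks like $K_2^-\otimes h(y,x)$ on the relevant fibers, and reversing the arc together with taking the reverse of the assigned $1$-regular digraph turns this into $K_2^+\otimes h(y,x)^-$, which by (the proof technique of) Lemma \ref{reverse} has the same underlying graph. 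Iterating this operation over all arcs of $\overrightarrow{C}_m$ that are ``backwards'' relative to $C_m^+$ transforms $\overrightarrow{C}_m\otimes_h\Gamma$ into $C_m^+\otimes_{h^*}\Gamma$ with $h^*$ exactly as defined in the statement, and shows
$$\hbox{und}(\overrightarrow{C}_m\otimes_h\Gamma)\cong\hbox{und}(C_m^+\otimes_{h^*}\Gamma).$$
In particular the two digraphs have the same number of weakly connected components, and these components are in bijection in a way that respects the vertex set $V(\overrightarrow{C}_m)\times V = V(C_m^+)\times V$.

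Next I would invoke Theorem \ref{Ahmad_extension} applied to $h^*:E(C_m^+)\to\Gamma\cup\Gamma^-$: its rainbow circuits in $M_{h^*}$ with color sequence $(s_1,s_2,\ldots,s_m)$ correspond exactly to the strongly connected components of $C_m^+\otimes_{h^*}\Gamma$. Since $C_m^+\otimes_{h^*}\Gamma$ is $1$-regular (each $h^*(e)$ is $1$-regular), each of its weakly connected components is in fact a directed cycle, hence strongly connected; so for this digraph ``weakly connected component'' and ``strongly connected component'' coincide. Combining this with the bijection from the first paragraph, the rainbow circuits of $M_{h^*}$ correspond to the weakly connected components of $\overrightarrow{C}_m\otimes_h\Gamma$, which is the assertion. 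I would also note that the coloring of $M_{h^*}$ referred to in the statement is literally the one supplied by Theorem \ref{Ahmad_extension}, so no separate bookkeeping of colors is needed.

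The only genuinely delicate point is the first paragraph: justifying that the ``reverse one arc and reverse its label'' move preserves the underlying undirected graph, and that performing these moves one at a time is legitimate even though the different fibers of the product interact. I would handle this by writing out the adjacency condition directly, as in the proof of Lemma \ref{reverse}, but restricted to the pair of fibers $\{x\}\times V$ and $\{y\}\times V$ attached to the single arc being flipped: an edge of the underlying graph between $(x,u)$ and $(y,v)$ exists iff $(u,v)\in E(h(y,x))$ (before the flip) iff $(v,u)\in E(h(y,x)^-)$ (after the flip), and edges not incident to this fiber pair are untouched; hence the underlying graph is invariant, and since the moves on distinct arcs affect disjoint ``directional'' data they may be composed freely. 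Everything else is a direct appeal to Theorem \ref{Ahmad_extension} and the $1$-regularity observation.
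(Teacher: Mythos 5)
Your proposal is correct and follows essentially the same route as the paper: establish $\hbox{und}(\overrightarrow{C}_m\otimes_h\Gamma)=\hbox{und}(C_m^+\otimes_{h^*}(\Gamma\cup\Gamma^-))$ via Lemma \ref{reverse} (arc reversal paired with reversing the assigned digraph), and then invoke Theorem \ref{Ahmad_extension}; your extra remarks on $1$-regularity forcing weak and strong components to coincide simply make explicit what the paper leaves implicit. Only note that the product obtained from $h^*$ should be written $C_m^+\otimes_{h^*}(\Gamma\cup\Gamma^-)$ rather than $C_m^+\otimes_{h^*}\Gamma$, a purely notational point.
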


\proof The key point is that und$(\overrightarrow{C}_m\otimes_h\Gamma)$=und$(C_m^+\otimes_{h^*}(\Gamma\cup \Gamma^-))$, since by Lemma \ref{reverse}, the digraph $\overrightarrow{C}_m\otimes_h\Gamma$ can be obtained from $C_m^+\otimes_{h^*}(\Gamma\cup \Gamma^-)$ by reversing the arcs related to the arcs in $E(C_m^+)\setminus E(\overrightarrow{C}_m)$. Then, the result holds by Theorem \ref{Ahmad_extension}.\qed

\vspace{1cm}

%{\bf Podem provar un resultat més senzill, quan, $m$ parell i l'assignació per $h$ és una seqüència del tipus, $(F_1,F_1^-, F_2,F_2^-, \ldots, F_r,F_r^-)$ !}

%\begin{tm}
%Let $m$ be an even integer and $\Gamma$ a family of $1$-regular digraphs such that $V(F)=V$ for every $F\in \Gamma$. If Im $h=\{F_1,F_1^-,F_2,F_2^-,\ldots, F_{m/2},F_{m/2}^-\}$, with every digraph and its reverse assigned to consecutive arcs, then
%$$C_m^+\otimes_h\Gamma\cong nC_m^+.$$
%\end{tm}
\subsection{The induced product of permutations}
A permutation $\pi$ is a bijective mapping $\pi: \{i\}_{i=1}^n\rightarrow \{i\}_{i=1}^n$. It is well known that a common way to describe permutations is by means of the product of mutually disjoint strongly oriented cycles, in which $(i,j)$ is an arc of an oriented cycle if and only if $\pi (i)=j$. Thus, every $1$-regular digraph on $V$ is identified with a permutation on the elements of $V$. This idea allows us to express Theorem \ref{Ahmad_extension} in terms of permutations.

Let $V(C_m^+)=\{a_1,a_2,\ldots, a_m\}$ with $E(C_m^+)=\{(a_i,a_{i+1})\}_{i=1}^{m-1}\cup \{(a_m,a_1)\}$ and let $\Gamma$ be a family of $1$-regular digraphs such that $V(F)=V$, for every $F\in \Gamma$, and $|V|=n$. Consider any function $h:E(C_m^+)\longrightarrow\Gamma $. Then, if we identify each element on $\Gamma$ with a permutation on $V$, we can consider the product of permutations $P_h=h(a_ma_1)\cdots h(a_2a_3)\cdot h(a_1a_2 )\in \mathfrak{S}_n,$ where $ \mathfrak{S}_n$ is the set of all permutations on the set $V$.

%We denote by $\sigma_1\cdot \sigma_2\cdots \sigma_s$ its disjoint cyclic representation.

\begin{ex}
Let $h,h':E(C_4^+)\rightarrow \Gamma$ be the functions introduced in Example \ref{Exemple_h_h'}. Then
\begin{eqnarray*}
% \nonumber to remove numbering (before each equation)
P_h&=& h(e_4)h(e_3)h(e_2)h(e_1)=(1\ 5\ 4\ 6\ 3\ 2)(1\ 5\ 4\ 6\ 3\ 2)(1\ 6\ 5\ 4\ 3\ 2)(1\ 3\ 5\ 4\ 2\ 6)(1\ 2\ 3\ 4\ 5\ 6) \\
  &=& (1\ 4\ 2\ 6)(3\ 5).\\
P_{h'}&=&h'(e_4)h'(e_3)h'(e_2)h'(e_1) = (1\ 3\ 5\ 4\ 2\ 6)(1\ 6\ 5\ 4\ 3\ 2)(1\ 5\ 4\ 6\ 3\ 2)(1\ 2\ 3\ 4\ 5\ 6)\\
  &=& (1)(2\ 3\ 4\ 5\ 6).\\
\end{eqnarray*}
Thus,
the products $(1\ 4\ 2\ 6)(3\ 5)$ and $(1)(2\ 3\ 4\ 5\ 6)$ are the disjoint cyclic decompositions (which are uniquely defined up to ordering) of $P_h$ and $P_{h'}$, respectively.
\end{ex}

The following result is a generalization of Theorem \ref{cicleALI} in terms of permutations.
\begin{tm}\label{Ahmad_extension_permutations}
Let $\Gamma$ be a family of $1$-regular digraphs such that $V(F)=V$ for every $F\in \Gamma$. Consider any function $h:E(C_m^+)\longrightarrow\Gamma $. Then, every cycle in the disjoint cyclic decomposition of $P_h$ corresponds to a strongly connected component of $C_m^+\otimes_h\Gamma$. Moreover if $\sigma$ is a cycle in the disjoint cyclic decomposition of $P_h$ then the corresponding strongly component of $C_m^+\otimes_h\Gamma$ has length $m$ times the length of $\sigma$. That is,
$$C_m^+\otimes_h\Gamma\cong C^+_{m|\sigma_1|}+C^+_{m|\sigma_2|}+\ldots +C^+_{m|\sigma_s|},$$
where, the product $\sigma_1\cdots \sigma_{s}$ is the disjoint cyclic decomposition of $P_{h}$ and $|\sigma_j|$ denotes the length of $\sigma_j$, for $j=1,2,\ldots, s$.
\end{tm}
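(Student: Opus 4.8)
The plan is to reduce Theorem \ref{Ahmad_extension_permutations} to Theorem \ref{Ahmad_extension} by showing that the disjoint cyclic decomposition of the permutation product $P_h = h(a_ma_1)\cdots h(a_2a_3)\cdot h(a_1a_2)$ encodes exactly the rainbow circuits of $M_h$ with color sequence $(s_1,\ldots,s_m)$. So the first step is to set up the dictionary between ``rainbow circuits'' and ``cycles of $P_h$''. Recall that each $h(a_ia_{i+1})\in\Gamma$ is a $1$-regular digraph on $V$, hence a permutation $\pi_i$ of $V$, and that in $M_h$ the arcs coming from $h(a_ia_{i+1})$ are precisely the ordered pairs $(x,\pi_i(x))$, all colored $s_i$.

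**Tracing a rainbow circuit.** The key observation is that a rainbow circuit with color sequence $(s_1,\ldots,s_m)$ starting at a vertex $x\in V$ must begin with an $s_1$-arc, i.e.\ go from $x$ to $\pi_1(x)$; then it must take an $s_2$-arc, going to $\pi_2(\pi_1(x))$; and so on cyclically. Thus after following $m$ consecutive arcs (one full period of the color sequence) we arrive at $\pi_m(\pi_{m-1}(\cdots\pi_1(x)\cdots)) = P_h(x)$. Iterating, the vertices of $V$ visited at the ``phase-$0$'' moments of the circuit are exactly the orbit of $x$ under $P_h$, and the circuit closes up precisely when $P_h^k(x)=x$ for the first time, i.e.\ after $k=|\sigma|$ periods, where $\sigma$ is the cycle of $P_h$ containing $x$. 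Hence each cycle $\sigma_j$ of $P_h$ gives one rainbow circuit of $M_h$, and that circuit uses $m|\sigma_j|$ arcs — and every arc of $M_h$ is used: the $s_i$-arcs used by the circuit for $\sigma_j$ are $\{(x,\pi_i(x)): x\in \text{support of }\sigma_j\text{ at phase }i-1\}$, and as $j$ ranges over all cycles these supports partition $V$, so each color class of arcs is covered exactly once. Therefore the rainbow circuits of $M_h$ are in bijection with the cycles $\sigma_1,\ldots,\sigma_s$ of $P_h$, they are pairwise arc-disjoint, together they exhaust $E(M_h)$, and the one associated to $\sigma_j$ has $m|\sigma_j|$ arcs.

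**Applying Theorem \ref{Ahmad_extension}.** By Theorem \ref{Ahmad_extension}, each such rainbow circuit corresponds to a strongly connected component of $C_m^+\otimes_h\Gamma$. From the preceding paragraph the total arc count of these circuits is $\sum_j m|\sigma_j| = m n = |E(C_m^+\otimes_h\Gamma)|$ (using $\sum_j|\sigma_j|=n$ and that the product is $1$-regular on $mn$ vertices), so these components already account for all of $C_m^+\otimes_h\Gamma$; there is nothing left over. Since every strongly connected $1$-regular digraph is a strongly oriented cycle, and the component from $\sigma_j$ has $m|\sigma_j|$ arcs (hence $m|\sigma_j|$ vertices), it is $C^+_{m|\sigma_j|}$. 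This yields $C_m^+\otimes_h\Gamma\cong C^+_{m|\sigma_1|}+\cdots+C^+_{m|\sigma_s|}$.

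**Main obstacle.** The only delicate point is making the tracing argument airtight: one must check carefully that following the color sequence periodically is \emph{forced} (a rainbow circuit has no freedom in which color to use next, because each vertex has out-degree exactly one in each $h(a_ia_{i+1})$), and — more importantly — that the ``phase bookkeeping'' really does partition the arc set, i.e.\ that the circuit associated to a cycle $\sigma_j$ of $P_h$ visits, at phase $i-1$, exactly the points $P_h^{t}$ applied to the support of $\sigma_j$ composed with $\pi_1,\ldots,\pi_{i-1}$, and that these phase-$(i-1)$ point sets, as $j$ varies, are a partition of $V$ for each fixed $i$. This is a routine but slightly fussy induction on $i$, relying on the fact that each $\pi_i$ is a bijection; once it is in place, the disjointness of the rainbow circuits and the exhaustion of $E(M_h)$ follow, and the matching with Theorem \ref{Ahmad_extension} is immediate. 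I would also remark that this recovers Theorem \ref{cicleALI} by specializing $\Gamma=\{C_n^+,C_n^-\}$, where $P_h$ is a product of $m$ powers of an $n$-cycle and hence a power of that $n$-cycle, whose cycle type gives the $n/k$ copies of $C_{mk}^+$.
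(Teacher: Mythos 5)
Your proposal is correct and follows essentially the same route as the paper: reduce to Theorem \ref{Ahmad_extension} by identifying rainbow circuits of $M_h$ with the cycles of $P_h$, noting that one full period of the color sequence applies $P_h$ and that the phase-$0$ vertices are the $P_h$-orbit, so the component has length $m|\sigma_j|$. Your write-up merely makes explicit the bookkeeping (forced color choices, arc-disjointness, exhaustion of $E(M_h)$) that the paper leaves implicit.
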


\proof Consider the coloring of $C_m^+$ with color sequence $(s_1,s_2,\ldots, s_m)$ that assigns color $s_1$ to $(a_1,a_{2})$. By Theorem \ref{Ahmad_extension}, every rainbow  circuit in $M_h$ with color sequence $(s_1,s_2,\ldots, s_m)$ corresponds to a strongly connected component of $C_m^+\otimes_h\Gamma$, when we consider the coloring of $M_h$ induced by the coloring of $C_m^+$. Since every rainbow circuit in $M_h$ can be obtained following a cycle in the product of permutations $h(a_ma_1)\cdots h(a_2a_3)\cdot h(a_1a_2 )$, we get that every
cycle in the disjoint cyclic decomposition of $P_h$ corresponds to a strongly component of $C_m^+\otimes_h\Gamma$.
Moreover, every element in this cycle represents the tail of the arc colored with $s_1$. Hence, the length of the strongly connected component is $m$ times the length of the cycle.
\qed

\begin{rem}
Note that, as a Corollary of the previous theorem we get Theorem \ref{cicleALI}. The reason is that, using the notation introduced in Theorem \ref{cicleALI}, together with the idea of permutations, the product
$P_h$ is a sequence of $m$ factors of the form $\{\sigma, \sigma^-\}$, where $r$ of them are $\sigma^-$. Thus, assuming that $m\ge 2r$ (the other case is similar), we get
$P_h=\sigma^{m-2r}$. Hence, if $|\subgp{m-2r}|=k$ in $Z_n$ then $P_h$ decomposes into $n/k$ disjoint cycles of length $k$. Therefore, by Theorem \ref{Ahmad_extension_permutations}, we obtain that $$C_m^+ \otimes_h \{{C_n^+, C_n^- }\}\cong \frac{n}{k}C^+_{mk}.
$$
\end{rem}

In order to conclude this section, we want to observe that so far, we have been using the theory of
permutations in order to describe the behavior of the $\otimes_h$-product for $1$-regular digraphs.
However, and although it is not the goal of this paper, it is worth to notice that this product can also
be used in order to understand properties of the permutations.

\section{(Di)graphs with unicyclic connected components}

Let $G=G_1+G_2+\ldots +G_l$ be a simple graph in which every $G_i$ is a connected component that contains exactly one cycle. Let $m_i$ be the length of the cycle in $G_i$ and let $\{a_j^i\}_{j=1}^{m_i}$ and $\{a_j^ia_{j+1}\}_{j=1}^{m_i-1}\cup \{a_m^ia_1^i\}$ be the vertex and edge sets of the cycle in $G_i$, respectively. Let $T_j^i$ be the tree attached at vertex $a_j^i$, where $|V(T_j^i)|\ge 1$. If $w_j^i\in V(T_j^i)$ is the vertex identified with $a_j^i$ then, we denote $G_i$ as follows:
$G_i=(T_1^i (w_1^i), T_2^i (w_2^i),\ldots, T_{m_i}^i (w_{m_i}^i)).$

Using this notation, the graph $G$ can be described as a succession of tuples.
Suppose that a component $G_i$ is of the form:
$$G_i= (T_1^i (w_1^i), T_2^i (w_2^i),\ldots, T_{m_i}^i (w_{m_i}^i), T_1^i (w_1^i), \ldots, T_{m_i}^i (w_{m_i}^i),\ldots, T_1^i (w_1^i), \ldots, T_{m_i}^i (w_{m_i}^i))
$$
where the sequence $(T_1^i (w_1^i), T_2^i (w_2^i),\ldots, T_{m_i}^i (w_{m_i}^i))$ repeats $k$ times. Then, we say that $G_i$ is a periodic component of multiplicity $k$. We denote such a component
$$G_i=(T_1^i (w_1^i), T_2^i (w_2^i),\ldots, T_{m_i}^i (w_{m_i}^i)) ^k.$$

An example of this notation appears in Figure \ref{Fig4}.

\begin{figure}[h]
\begin{center}
  % Requires \usepackage{graphicx}
 % \includegraphics[width=11cm]{Figure1}\\
  \includegraphics[width=153pt]{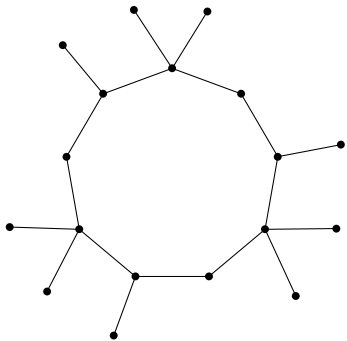}\\
  \caption{The graph $G=(P_1(w_1),P_2(w_2),P_3(w_3))^3$, where $P_s$ is the path of order $s$ and $w_3$ is the central vertex of $P_3$.}\label{Fig4}
  \end{center}
\end{figure}

A useful result in order to study the union of acyclic graphs is due to Figueroa et al. \cite{F1}.

\begin{tm}\cite{F1}\label{product_of_tree}
Let $F$ be an acyclic graph and let $\Sigma_n$ be the set of $1$-regular digraphs of order $n$. Consider any function $h:E(\overrightarrow{F})\rightarrow \Sigma_n$. Then, $\overrightarrow{F}\otimes_h\Sigma_n=n\overrightarrow{F}.$
\end{tm}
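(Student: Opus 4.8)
The plan is to prove that $\overrightarrow{F}\otimes_h\Sigma_n$ consists of exactly $n$ disjoint copies of $\overrightarrow{F}$ by exhibiting an explicit partition of the vertex set $V(\overrightarrow{F})\times V$ into $n$ blocks, each inducing a copy of $\overrightarrow{F}$. Since $F$ is acyclic, its underlying graph is a forest; I would first reduce to the case where $F$ is a tree (the general case follows by applying the tree case to each connected component of $\overrightarrow{F}$ separately, because the $\otimes_h$-product respects connected components of the first factor). So assume $\overrightarrow{F}$ is an orientation of a tree $T$ with vertex set $\{b_1,b_2,\ldots,b_p\}$.

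First I would root the tree at $b_1$ and process the vertices in an order $b_1, b_2, \ldots, b_p$ such that each $b_j$ with $j\ge 2$ is adjacent (in $T$) to exactly one earlier vertex, its parent. For each fixed starting vertex $x\in V$ I will define a map $\phi_x: V(\overrightarrow{F})\to V(\overrightarrow{F})\times V$ with $\pi_1\circ\phi_x=\mathrm{id}$, built inductively along this order: set $\phi_x(b_1)=(b_1,x)$, and if $b_j$'s parent is $b_i$ and $\phi_x(b_i)=(b_i,z)$ has already been defined, then define $\phi_x(b_j)=(b_j,z')$ where $z'$ is determined by the unique arc of $h$ on the edge $b_ib_j$: if the edge is oriented $(b_i,b_j)$ in $\overrightarrow{F}$, take $z'$ to be the head of the unique arc of $h(b_i,b_j)$ with tail $z$; if it is oriented $(b_j,b_i)$, take $z'$ to be the tail of the unique arc of $h(b_j,b_i)$ with head $z$. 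This is well defined precisely because every digraph in $\Sigma_n$ is $1$-regular, so at each vertex there is exactly one out-arc and exactly one in-arc.

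The key claims to verify are then: (i) each $\phi_x$ is injective and its image $B_x=\phi_x(V(\overrightarrow{F}))$ induces a subdigraph of $\overrightarrow{F}\otimes_h\Sigma_n$ isomorphic to $\overrightarrow{F}$ via $\phi_x$ — injectivity follows because $\pi_1$ restricted to $B_x$ is already a bijection onto $V(\overrightarrow{F})$, and the edge correspondence is immediate from the definition of the $\otimes_h$-product together with the construction of $\phi_x$; and (ii) the blocks $\{B_x\}_{x\in V}$ partition $V(\overrightarrow{F})\times V$. For (ii), since each $B_x$ has $p$ elements, one per fiber $\{b_i\}\times V$, it suffices to show $B_x\cap B_{x'}=\emptyset$ for $x\ne x'$, or equivalently that $x\mapsto \phi_x(b_i)$ is a bijection $V\to\{b_i\}\times V$ for each $i$; this I would prove by induction on the distance from $b_1$ to $b_i$, using that the bijection between fibers at the two ends of an edge is given by a $1$-regular digraph (hence a permutation of $V$), so composing along the tree path from $b_1$ to $b_i$ yields a permutation of $V$. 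Finally, since the $n$ blocks partition the vertex set, each induces a copy of $\overrightarrow{F}$, and $\overrightarrow{F}\otimes_h\Sigma_n$ is $1$-regular with exactly $np$ vertices and $n\cdot|E(F)|$ arcs, there are no arcs between distinct blocks, giving $\overrightarrow{F}\otimes_h\Sigma_n = n\overrightarrow{F}$.

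I expect the main obstacle to be a clean bookkeeping of claim (ii): one must be careful that the inductive definition of $\phi_x$ along the tree is independent of the chosen processing order and that the "transport" permutations along edges compose correctly regardless of edge orientation. A slick way around this is to observe that after forgetting orientations, $\overrightarrow{F}\otimes_h\Sigma_n$ restricted over a single edge is a disjoint union of $n$ copies of $K_2$ (a perfect matching between the two fibers), and then use connectedness of $T$ and induction on $p$: adding a leaf $b_p$ to $T$ attaches, to each of the $n$ copies of the tree $T-b_p$ already obtained, exactly one new vertex via the matching over the edge at $b_p$, preserving the count. This inductive-on-$p$ formulation sidesteps global path-composition arguments and is the route I would actually write up.
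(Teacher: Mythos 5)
Your argument is essentially correct, but note that the paper does not prove this statement at all: it is quoted as Theorem~\ref{product_of_tree} from the reference [F1] (Figueroa-Centeno et al.), so there is no in-paper proof to compare against, and a self-contained argument like yours is exactly what is missing. Your route — reduce to a tree via the component decomposition (which is the content of Lemma~\ref{components} of the paper), root the tree, transport a starting value $x\in V$ along edges using the fact that each $h(e)\in\Sigma_n$ is a permutation of $V$ (inverting it when the edge is traversed against its orientation), and then conclude by an arc count that the $n$ blocks $B_x$ exhaust all arcs — is sound; the leaf-induction reformulation at the end is an equally valid and arguably cleaner write-up, since over a single arc of $\overrightarrow{F}$ the product is a perfect matching between the two fibers. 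One slip to fix: $\overrightarrow{F}\otimes_h\Sigma_n$ is \emph{not} $1$-regular (it inherits the in- and out-degrees of $\overrightarrow{F}$, which is an oriented forest, not a union of cycles); fortunately your conclusion never uses $1$-regularity, only the count $|E(\overrightarrow{F}\otimes_h\Sigma_n)|=n|E(F)|$ together with the $n(p-1)$ arcs already accounted for inside the blocks, so deleting the words ``is $1$-regular'' leaves the proof intact. Also state explicitly that $\phi_x$ depends only on the choice of root (each non-root vertex has a unique parent), which disposes of your worry about the processing order.
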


The following lemma is an easy observation.

\begin{lm}\label{components}
Let $D=D_1+D_2+\ldots+D_l$ be a digraph where $D_1$, $D_2$,\ldots, $D_l$ are the weakly connected components of $D$. Consider any function $h:E(D)\rightarrow \Gamma$, where $\Gamma$ is a family of digraphs such that $V(F)=V$ for every $F\in \Gamma$. Let $h_i=h_{|E(D_i)}:E(D_i)\rightarrow \Gamma$ be the restriction of $h$ over $E(D_i)$, for each $i=1,2,\ldots, l$. Then, $$D\otimes_h\Gamma\cong (D_1\otimes_{h_1}\Gamma)+\ldots +(D_l\otimes_{h_l}\Gamma).$$
\end{lm}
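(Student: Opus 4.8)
The plan is to verify the isomorphism by carefully matching up vertex sets, arc sets, and the incidence structure on both sides, using only the definition of the $\otimes_h$-product and the fact that the $D_i$ partition $D$ into weakly connected components. First I would set up notation: write $V(D) = \bigcup_{i=1}^l V(D_i)$ as a disjoint union (since the $D_i$ are the weakly connected components), and recall that every $F \in \Gamma$ has the same vertex set $V$. Consequently both $D \otimes_h \Gamma$ and the disjoint union $(D_1 \otimes_{h_1} \Gamma) + \ldots + (D_l \otimes_{h_l} \Gamma)$ have vertex set $\bigcup_{i=1}^l \bigl(V(D_i) \times V\bigr) = V(D) \times V$, so the natural candidate isomorphism is the identity map on $V(D) \times V$.

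Next I would check that this identity map is an isomorphism of digraphs, i.e. that it preserves and reflects arcs. Take any potential arc $\bigl((a,x),(b,y)\bigr)$ with $a,b \in V(D)$ and $x,y \in V$. By definition of the product, $\bigl((a,x),(b,y)\bigr) \in E(D \otimes_h \Gamma)$ if and only if $(a,b) \in E(D)$ and $(x,y) \in E(h(a,b))$. Now $(a,b) \in E(D)$ forces $a$ and $b$ to lie in the same weakly connected component, say $D_i$, so $(a,b) \in E(D_i)$ and, by definition of the restriction, $h_i(a,b) = h(a,b)$; hence the condition is equivalent to $(a,b) \in E(D_i)$ and $(x,y) \in E(h_i(a,b))$, which is exactly the condition for $\bigl((a,x),(b,y)\bigr) \in E(D_i \otimes_{h_i} \Gamma)$. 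Since arcs between distinct components $D_i$ and $D_j$ with $i \ne j$ do not exist in $D$ (and the disjoint union of the $D_i \otimes_{h_i} \Gamma$ puts no arcs between the corresponding blocks either), this shows the edge sets agree. Therefore the identity map is a digraph isomorphism.

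There is really no serious obstacle here; the lemma is essentially definitional bookkeeping, and the author already flags it as "an easy observation." The one point that deserves a clean statement rather than a hand-wave is the claim that an arc of $D$ never crosses between two different weakly connected components — this is immediate from the definition of weakly connected component, but it is the hinge that lets one replace $h$ by $h_i$ without changing the value on the relevant arcs. I would write the proof in two short sentences: identify the vertex sets, then run the if-and-only-if chain above on an arbitrary arc, invoking that hinge. No figures, no auxiliary lemmas, and no appeal to the earlier rainbow-eulerian machinery are needed.
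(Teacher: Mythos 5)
Your proof is correct: the paper itself offers no argument for this lemma, dismissing it as ``an easy observation,'' and your identity-map verification --- using that every arc of $D$ lies within a single weakly connected component, so $h$ and $h_i$ agree on the relevant arcs --- is exactly the definitional bookkeeping the paper implicitly presupposes. Nothing is missing and no different route is taken.
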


As a corollary of Theorem \ref{cicleALI} and Theorem \ref{product_of_tree}, together with Lemma \ref{components} we get the next result.

\begin{tm}\label{producte_uni}
Let $G=G_1+G_2+\ldots +G_l$ be a simple graph where each $G_i$ is of the form $G_i=(T_1^i (w_1^i), T_2^i (w_2^i),\ldots, T_{m_i}^i (w_{m_i}^i))$ and each $T_j^i$ is a tree. Consider $D=D_1+D_2+\ldots+D_l$ an oriented graph obtained from $G$ by considering strong orientations of all its cycles, where und$(D_i)=G_i$. Let $h:E(D)\rightarrow \{C_n^+, C_n^- \}$ be any function such that the restriction of $h$ over $E(D_i)$ assigns $C_n^-$ to exactly $r_i$ arcs of its cicle, for each $i=1,2,\ldots, l$. Then,
$$und(D\otimes_h\Gamma)\cong \frac{n}{k_1}(T_1^1 (w_1^1), T_2^1 (w_2^1),\ldots, T_{m_1}^1 (w_{m_1}^1))^{k_1}+\ldots +\frac{n}{k_l}(T_1^l(w_1^l), T_2^l (w_2^l),\ldots, T_{m_l}^l (w_{m_l}^l))^{k_l},$$
where, for each $i=1,2,\ldots, l$, $k_i$ is the order of the subgroup of $\mathbb{Z}_n$ generated by $m_i-2r_i$.
\end{tm}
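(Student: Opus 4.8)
The plan is to peel the problem apart with Lemma \ref{components}, describe each unicyclic component's cycle via Theorem \ref{cicleALI} and its attached trees via Theorem \ref{product_of_tree}, and then reassemble the two pieces along the fibers over the cycle vertices; write $\Gamma=\{C_n^+,C_n^-\}$ throughout. By Lemma \ref{components} we have $D\otimes_h\Gamma\cong\sum_{i=1}^{l}(D_i\otimes_{h_i}\Gamma)$, so it suffices to identify $\hbox{und}(D_i\otimes_{h_i}\Gamma)$ for one fixed $i$. Drop the superscript $i$ and assume, without loss of generality, that the cycle $C$ of $D$ is the digraph $C_m^+$ on vertices $a_1,\dots,a_m$ (if it is $C_m^-$ instead, reverse every arc of $D$ together with every value of $h$; by the argument of Lemma \ref{reverse} this leaves $\hbox{und}(D\otimes_h\Gamma)$ unchanged and merely replaces $m-2r$ by its negative, hence does not affect $k_i$). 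Let $\overrightarrow{T_j}$ denote the orientation of $T_j$ induced by $D$, rooted at $w_j=a_j$, put $r=r_i$ and $k=k_i=|\subgp{m-2r}|$ in $\mathbb{Z}_n$. The trees meet $C$ and one another only in cycle vertices, never in edges, so $E(D)=E(C_m^+)\sqcup\bigsqcup_j E(\overrightarrow{T_j})$; and because the arcs of $D\otimes_h\Gamma$ contributed by an arc $e$ of $D$ depend only on $e$ and $h(e)$, the arc set of $D\otimes_h\Gamma$ is, on the common vertex set $V(D)\times V$, the disjoint union of the arc set of $C_m^+\otimes_{h|_{C_m^+}}\Gamma$ with those of the digraphs $\overrightarrow{T_j}\otimes_{h|_{\overrightarrow{T_j}}}\Gamma$.

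For the cycle piece, Theorem \ref{cicleALI} (or (\ref{producte_directe_strong_cicles}) when $r=0$) gives $C_m^+\otimes_{h|_{C_m^+}}\Gamma\cong\frac{n}{k}C_{mk}^+$; moreover, exactly as in the proof of Theorem \ref{main}, each arc of this product sends a vertex over $a_j$ to one over $a_{j+1}$, so each of these $n/k$ cycles, traced around once, projects cyclically onto $a_1,\dots,a_m$, meeting each $a_j$ exactly $k$ times and in this fixed cyclic order. For the tree pieces, each $\overrightarrow{T_j}$ is acyclic, so $h|_{\overrightarrow{T_j}}$ takes values in $\Sigma_n$ and Theorem \ref{product_of_tree} gives $\overrightarrow{T_j}\otimes_{h|_{\overrightarrow{T_j}}}\Gamma\cong n\,\overrightarrow{T_j}$; furthermore, every value of $h$ is a single oriented cycle, hence a permutation of $V$, so the $V$-coordinate of any vertex of such a component is forced by the $V$-coordinate of the copy of $w_j$ inside it. Consequently the $n$ components are indexed by $x\in V$, the one indexed by $x$ having root $(w_j,x)=(a_j,x)$ and being carried isomorphically onto $\overrightarrow{T_j}$ by the projection $(a,y)\mapsto a$.

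To finish, observe that a component of $\overrightarrow{T_j}\otimes_{h|_{\overrightarrow{T_j}}}\Gamma$ contains the cycle vertex $(a_j,x)$ but no other vertex of $V(C)\times V$; hence two vertices of $V(C)\times V$ lie in one weakly connected component of $D\otimes_h\Gamma$ iff they lie in one component of $C_m^+\otimes_{h|_{C_m^+}}\Gamma$, so the components of $D\otimes_h\Gamma$ correspond bijectively to the $n/k$ cycles $C_{mk}^+$, each such component being one of these cycles together with, glued at each of its vertices $(a_j,x)$, the copy of $\overrightarrow{T_j}$ rooted there. Passing to underlying graphs, such a component becomes a cycle of length $mk$ that carries, in cyclic order, the trees $T_1,\dots,T_m$ repeated $k$ times, i.e. it is $(T_1(w_1),\dots,T_m(w_m))^k$. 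Therefore $\hbox{und}(D_i\otimes_{h_i}\Gamma)\cong\frac{n}{k_i}(T_1^i(w_1^i),\dots,T_{m_i}^i(w_{m_i}^i))^{k_i}$, and summing over $i$ by Lemma \ref{components} gives the statement. I expect the genuine difficulty to be exactly this last gluing step: making sure that the $n$ copies of each $\overrightarrow{T_j}$ land one per cycle vertex and honour the cyclic pattern $T_1,\dots,T_m$, rather than several copies collapsing onto a single fiber or the order of the $T_j$ being scrambled; everything else is bookkeeping on top of the three cited theorems.
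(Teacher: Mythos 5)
Your proposal is correct and follows essentially the same route as the paper's own proof: reduce to a single component via Lemma \ref{components}, apply Theorem \ref{cicleALI} to the oriented cycle and Theorem \ref{product_of_tree} to the attached trees, and glue along the identification of $(a_j,x)$ with $(w_j,x)$. The only difference is that you spell out the gluing step (one tree copy per cycle vertex, cyclic order $T_1,\dots,T_m$ repeated $k$ times) and the $C_m^-$ case explicitly, details the paper compresses into a single sentence.
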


\proof By Lemma \ref{components}, it is enough to prove that if $G$ is of the form $(T_1 (w_1), T_2 (w_2),\ldots, T_{m} (w_{m}))$ then und$(D\otimes_h\Gamma)\cong n/k(T_1 (w_1), T_2 (w_2),\ldots, T_{m} (w_{m}))^{k}$, where $D$ is an oriented graph obtained from $G$ by considering a strong orientation of its cycle, namely $C_m^+$ and $h:E(D)\rightarrow \{C_n^+, C_n^- \}$ is any function. By Theorem \ref{cicleALI}, if the function $h_{|E(C_m^+)}:E(C_m^+))\rightarrow \{C_n^+, C_n^- \}$ assigns $C_n^-$ to exactly $r$ arcs of  $C^+_m$ then $(C_m^+\otimes_{h_{|E(C_m^+)}}\{C_n^+, C_n^- \})\cong n/k C^+_{mk},$ where $k$ is the order of the subgroup of $Z_n$ generated by $m-2r$. Let $\overrightarrow{F}$ be the oriented digraph obtained from $T_1+ T_2 +\ldots+ T_{m}$ by considering the orientation induced by $D$. Assume that $V(C_n^+)=V(C_n^-)=\{i\}_{i=1}^n$. By Theorem \ref{product_of_tree}, we obtain that $\overrightarrow{F}\otimes_{h_{|E(\overrightarrow{F})}}\{C_n^+, C_n^- \}=n\overrightarrow{F}.$ Moreover, since each vertex $a_j\in V(C_m^+)$ is identified with a vertex $w_j\in V(T_j)$, we obtain that $(a_j,i)$ is identified with $(w_j,i)$, for each $i=1,2,\ldots, n$ and for each $j=1,2,\ldots,m$. Therefore, the desired result has been reached.\qed

An extension of the previous result can be obtained by considering Theorem \ref{Ahmad_extension_permutations}, Theorem \ref{product_of_tree} and Lemma \ref{components}.
\begin{tm}\label{producte_uni_amb_permutacions}
Let $G=G_1+G_2+\ldots +G_l$ be a simple graph where each $G_i$ is of the form $G_i=(T_1^i (w_1^i), T_2^i (w_2^i),\ldots, T_{m_i}^i (w_{m_i}^i)).$ Consider an oriented graph $D=D_1+D_2+\ldots+D_l$ obtained from $G$ by considering strong orientations of all its cycles, where und$(D_i)=G_i$. Let $\Gamma$ be a family of $1$-regular digraphs such that $V(F)=V$ for every $F\in \Gamma$. Consider any function $h:E(D)\longrightarrow\Gamma $. Denote by $C^+_{m_i}$ the cycle of $D_i$ and $h^c_i=h_{|E(C^+_{m_i})}:E(C^+_{m_i})\rightarrow \Gamma$ the restriction of $h$ over $E(C^+_{m_i})$.
Then,
$$und(D\otimes_h\Gamma)\cong \sum_{i=1}^l\sum_{j=1}^{s_i}(T_1^i (w_1^i), T_2^i (w_2^i),\ldots, T_{m_i}^i (w_{m_i}^i))^{|\sigma_j^i|},$$
where, for each $i=1,2,\ldots, l$, the product $\sigma_1^i\cdots \sigma_{s_i}^i$ is the disjoint cyclic decomposition of $P_{h^c_i}$ and $|\sigma_i^j|$ denotes the length of $\sigma_j^i$.
\end{tm}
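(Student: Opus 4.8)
The plan is to follow the proof of Theorem \ref{producte_uni} almost verbatim, substituting the finer Theorem \ref{Ahmad_extension_permutations} for Theorem \ref{cicleALI}. By Lemma \ref{components} it suffices to treat the case $l=1$, so assume $G=(T_1(w_1),T_2(w_2),\ldots,T_m(w_m))$ and that $D$ is obtained from $G$ by strongly orienting its cycle, which we call $C_m^+$, and by giving the trees $T_1,\ldots,T_m$ some orientation; let $\overrightarrow{F}$ be the resulting orientation of the forest $T_1+T_2+\cdots+T_m$, and write $h^c=h_{|E(C_m^+)}$ and $h^F=h_{|E(\overrightarrow{F})}$.

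First I would apply Theorem \ref{Ahmad_extension_permutations} to $h^c$: if $\sigma_1\cdots\sigma_s$ is the disjoint cyclic decomposition of $P_{h^c}$, this gives $C_m^+\otimes_{h^c}\Gamma\cong\sum_{j=1}^{s}C^+_{m|\sigma_j|}$, and, reading off the proof of that theorem, the strongly connected component associated with $\sigma_j$ is traced by going $|\sigma_j|$ times around $C_m^+$ along the rainbow circuit in $M_{h^c}$ determined by $\sigma_j$; in particular its vertices with first coordinate $a_t$ are exactly the pairs $(a_t,x)$ with $x$ ranging over the support of $\sigma_j$. Next, since $\Gamma$ is a family of $1$-regular digraphs on a set $V$ with $|V|=n$, we have $\Gamma\subseteq\Sigma_n$, so Theorem \ref{product_of_tree} (applied to $h^F$ followed by the inclusion $\Gamma\hookrightarrow\Sigma_n$) yields $\overrightarrow{F}\otimes_{h^F}\Gamma=n\overrightarrow{F}$; that is, the tree part of the product splits into $n$ vertex-disjoint copies of each $T_t$, one for each value $x$ of the second coordinate.

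It then remains to glue the two computations using Lemma \ref{components} and the identification $a_t=w_t$ built into the notation for $G$: in $D\otimes_h\Gamma$ the vertex $(a_t,x)$ coincides with $(w_t,x)$, so attaching to each vertex $(a_t,x)$ of a cycle component $C^+_{m|\sigma_j|}$ the copy of $T_t$ indexed by $x$ reconstitutes, once per pass of the rainbow circuit through $C_m^+$, the block $(T_1(w_1),\ldots,T_m(w_m))$. Since that circuit makes $|\sigma_j|$ passes, the component becomes the periodic component $(T_1(w_1),\ldots,T_m(w_m))^{|\sigma_j|}$; summing over $j=1,\ldots,s$ and then over $i=1,\ldots,l$ gives the stated decomposition of $\mathrm{und}(D\otimes_h\Gamma)$.

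The step I expect to be the main obstacle is this last gluing: one must check that the second-coordinate indices of the $n$ disjoint tree copies line up with the second coordinates occurring along each cycle component, and that wrapping $|\sigma_j|$ times around $C_m^+$ produces \emph{periodicity} of multiplicity $|\sigma_j|$ rather than $|\sigma_j|$ separate copies of the unicyclic block. This is exactly the bookkeeping already performed in the proof of Theorem \ref{producte_uni}, so I would reuse that argument, now driven by the cyclic structure of $P_{h^c}$ instead of by the single generator $m-2r$.
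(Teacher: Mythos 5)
Your proposal matches the paper's own proof, which is stated in one line: it is the proof of Theorem \ref{producte_uni} (reduction to a single component via Lemma \ref{components}, Theorem \ref{product_of_tree} for the oriented forest, and the identification of $(a_j,x)$ with $(w_j,x)$) with Theorem \ref{Ahmad_extension_permutations} used in place of Theorem \ref{cicleALI} on the cycle. The gluing and periodicity bookkeeping you flag is exactly the part the paper delegates to the earlier proof, so no new idea is missing.
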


\proof The proof is similar to the proof of Theorem \ref{producte_uni}. The only difference is that we use Theorem \ref{Ahmad_extension_permutations} instead of Theorem \ref{cicleALI}.\qed

One of the goals in \cite{LopMun13a} was the characterization of the existence of a nontrivial decomposition of a graph in terms of the $\otimes_h$-product. The next result is a contribution in this direction when we apply the product over and over again to digraphs. Moreover, this result can be though as the converse of Theorem \ref{producte_uni}.

\begin{tm}\label{th_charact_decom_suma_unicyclics}
Let $l,m,n$ and $s$ be nonnegative integers, such that, $m\ge 1$, $n\ge 3$ is odd and if $m$ is odd then $m\ge n$. Let $G=(T_1(w_1),T_2(w_2),\ldots, T_m(w_m))$ be a unicyclic graph with a cycle of length $m$ and $D_1$ be an oriented graph obtained from $G$ by considering a strong orientation of its cycle. Assume that $(a_0^l,a_1^l,\ldots, a_l^l)$ is any sequence of nonnegative integers such that, $a_0^0=n$ when $l=0$,
$$\sum_{i=0}^{l}\frac{a_{i}^l}{n^{l-i}}=n \ \hbox{and}\ \sum_{i=0}^k\frac{a_{i}^l}{n^{k-i+1}}\in \mathbb{Z}^+,\ \hbox{for each}\ k=0,1,l-1,\ \hbox{when}\ l\ge 1.$$
Then, there exists a sequence of digraphs $D_i$ and a sequence of functions $h_i: E(D_i)\rightarrow \{C_n^+,C_n^-\}$,
 $i=1,\ldots, l+s+1$, such that $D_{i+1}=D_i\otimes_{h_i}\{C_n^+,C_n^-\}$,
%, for each $i\in \{2,\ldots,l+s\}$
$$G^{n^{s}}\cong \hbox{und}(D_{s+1})\quad \hbox{and}\quad \sum_{i=0}^la_i^l G^{n^{s+i}}\cong \hbox{und}(D_{l+s+2}).$$
\end{tm}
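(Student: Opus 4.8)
The plan is to build the required sequence of digraphs by iterating the $\otimes_h$-product with $\Gamma=\{C_n^+,C_n^-\}$ and to control, at each step, how the number of copies of the underlying unicyclic graph $G$ multiplies. First I would reduce to the core statement about the cycle: by Theorem \ref{producte_uni} (more precisely, the argument in its proof, which combines Theorem \ref{cicleALI}, Theorem \ref{product_of_tree} and Lemma \ref{components}), applying $\otimes_h\{C_n^+,C_n^-\}$ to a disjoint union of copies of $D_1$ produces a disjoint union of copies of oriented unicyclic graphs whose underlying graphs are periodic components $G^k$, where $k$ is the order in $\mathbb{Z}_n$ of the residue $m-2r$ determined by the number $r$ of arcs sent to $C_n^-$. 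Since $n\ge 3$ is odd, $\gcd(2,n)=1$, so as $r$ ranges over the admissible values the residue $m-2r$ ranges over a full residue class pattern; in particular, choosing $r$ with $m-2r\equiv 1\pmod n$ gives $k=1$, i.e. the product simply multiplies the number of copies of $G$ by $n$ and leaves the component shape unchanged. The hypothesis "if $m$ is odd then $m\ge n$" is exactly what guarantees such an $r<m$ exists (when $m$ is even, $m-2r$ can already be $\equiv$ anything even, but we only need $\equiv 1$ when... — more carefully, one checks the congruence $m-2r\equiv 1$ always has a solution $0\le r<m$ under the stated hypothesis).

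With this building block in hand, the construction splits into two phases. Phase one: starting from $D_1$ with $\operatorname{und}(D_1)=G$, apply the product $s$ times, each time choosing $h_i$ so that the generated residue is $1$; after $s$ steps $\operatorname{und}(D_{s+1})\cong G^{n^s}$, which is $n^s$ disjoint copies of $G$ — here I use that a periodic component of multiplicity $1$ is just the graph itself, and that $G^{n^s}$ in the paper's notation means $n^s$ copies of $G$. Phase two: continue from $D_{s+1}$, which is $n^s\cdot G$, and perform $l+1$ further product steps to reach $\sum_{i=0}^l a_i^l G^{n^{s+i}}$, i.e. $\sum_{i=0}^l a_i^l n^{s+i}$ copies of $G$ in total (again all components of multiplicity $1$). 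The point is that at each of these steps I may partition the current collection of copies of $D$ into blocks and, by Lemma \ref{components}, treat each block independently: on some blocks I again force residue $1$ (multiplying that block's count by $n$), while on other blocks I may instead choose $r$ so that $m-2r$ generates a subgroup of a different order $k$, turning $k$ copies of $G$ into one copy of $G^k$. The arithmetic conditions on the sequence $(a_0^l,\ldots,a_l^l)$ — namely $\sum a_i^l/n^{l-i}=n$ and the partial-sum integrality $\sum_{i=0}^k a_i^l/n^{k-i+1}\in\mathbb{Z}^+$ for $k=0,\ldots,l-1$ — are precisely the feasibility conditions ensuring that such a partitioning into blocks of the right sizes can be carried out consistently at every one of the $l+1$ stages; I would verify this by unwinding the recursion, letting $b_k=\sum_{i=0}^k a_i^l/n^{k-i+1}$ denote the number of copies of $G$ present after stage $k$ (so $b_{-1}=n^s$ after phase one, interpreted appropriately, and $b_l\cdot n = \sum_i a_i^l n^{s+i}$), and checking $b_k\in\mathbb{Z}^+$ and $b_k\le n\,b_{k-1}$ so that the split "$a_k^l\cdot n^0$ new copies get multiplicity-$1$ treatment and the remaining $b_{k-1}-(\text{contribution})$ copies get multiplied by $n$" is legitimate.

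The main obstacle I expect is the bookkeeping in phase two: one must show that the stated integrality/size constraints on $(a_i^l)$ are exactly equivalent to the existence, at each stage, of a valid choice of how many copies to "hold back at multiplicity one" versus "inflate by $n$", and that after exactly $l+1$ stages the multiset of components is $\{G^{n^{s}} \text{ with multiplicity } a_0^l\}\cup\cdots\cup\{G^{n^{s+l}}\text{ with multiplicity } a_l^l\}$. Concretely, I would track the contribution to the final count coming from a copy of $G$ that is first "inflated" for $j$ of the stages and then "held at multiplicity one" thereafter — it contributes a factor $n^j$, i.e. yields a $G^{n^{s+j}}$ component — and the condition $\sum_{i=0}^l a_i^l/n^{l-i}=n$ says these contributions can be scheduled to exhaust exactly the $n^{s+1}$ worth of copies that phase one (one extra step) would have produced, while the partial-sum conditions say the schedule never asks to hold back more copies than are available at that stage. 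Everything else — that the tree parts ride along unchanged (Theorem \ref{product_of_tree}), that disjoint components can be handled separately (Lemma \ref{components}), and that forcing residue $1$ is always possible under the hypotheses on $m$ and $n$ — is routine given the earlier results, so the proof reduces to making this scheduling argument precise.
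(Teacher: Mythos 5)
There is a genuine gap, and it comes from a misreading of the paper's notation that then propagates into the mechanics of your construction. In this paper $G^{k}=(T_1(w_1),\ldots,T_m(w_m))^{k}$ is \emph{not} $k$ disjoint copies of $G$: it is a single connected unicyclic graph, the periodic component of multiplicity $k$, whose cycle has length $mk$ and whose tree-sequence repeats $k$ times. Consequently the target $\sum_{i=0}^l a_i^l G^{n^{s+i}}$ is a disjoint union of $a_i^l$ copies of these ever-larger connected graphs, not ``$\sum_i a_i^l n^{s+i}$ copies of $G$, all of multiplicity one'' as you assert. With the correct reading, the effect of the two residue choices is the opposite of what you claim: if $m-2r\equiv 1\pmod n$ then $\subgp{1}=\mathbb{Z}_n$, so $k=n$ in Theorem \ref{producte_uni} and one component $G^{j}$ becomes \emph{one} copy of $G^{jn}$ (cycle length inflated by $n$); it is the choice $m-2r\equiv 0$ (trivial subgroup, $k=1$) that turns one component into $n$ copies of itself with the shape unchanged. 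Your phase one therefore needs the full-group choice at every step (the paper takes $r_i=(mn^{i-1}-1)/2$ or $(mn^{i-1}-2)/2$, giving residue $1$ or $2$, a generator since $n$ is odd), and the hypothesis ``$m$ odd $\Rightarrow m\ge n$'' is not what makes residue $1$ achievable (that is always possible); it is what makes the residue-$0$ choice $r'=(m+n)/2\le m$ feasible at the first step where you need to multiply the number of components.

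A second, independent error is the claim that one can choose $r$ so as to turn ``$k$ copies of $G$ into one copy of $G^k$.'' The product never merges components: by Lemma \ref{components} it acts on each weakly connected component separately, and by Theorem \ref{producte_uni} a single component with cycle length $m'$ becomes $n/k$ components with cycle length $m'k$. So your phase-two scheduling, in which distinct copies are coalesced and all components stay isomorphic to $G$, cannot produce the stated graph; at best it proves a different (and incorrectly interpreted) statement. The paper's actual proof keeps your two-phase outline but with the roles corrected: after reaching the single component $G^{n^s}$, at each subsequent stage it chooses, component by component, either the trivial-subgroup option ($G^{n^{j}}\mapsto nG^{n^{j}}$) or the full-group option ($G^{n^{j}}\mapsto G^{n^{j+1}}$), records these choices in nonnegative integers $j_k^l$, proves by induction a closed formula for und$(D_{l+s+2})$ in terms of the $j_k^l$, and finally shows the arithmetic conditions on $(a_0^l,\ldots,a_l^l)$ are exactly what is needed for the system $a_l^l=n-\sum_k j_k^k$, $a_{k-1}^l=n\sum_t j_t^{l-k+t}-\sum_t j_t^{l-k+t+1}$ to have a nonnegative integer solution. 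To repair your argument you would have to swap the two operations throughout, abandon the merging step, and redo the bookkeeping so that it tracks counts of components of each multiplicity $n^{s},n^{s+1},\ldots$ rather than counts of copies of $G$.
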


\proof If $m$ is odd we define $r_i=(mn^{i-1}-1)/2$. Otherwise, we define $r_i=(mn^{i-1}-2)/2$. In both cases, the subgroup of $\mathbb{Z}_n$ generated by $mn^{i-1}-2r_i$ is $\mathbb{Z}_n$. Similarly, if $m$ is even we define $r'_i=mn^{i-1}/2$. Otherwise, we define $r'_i=(mn^{i-1}+n)/2$. Thus, the
subgroup of $\mathbb{Z}_n$ generated by $mn^{i-1}-2r'_i$ is the trivial subgroup.

We construct $h_i$ and $D_{i+1}$ recursively as follows, for $i\ge 1$. Let $h_i: E(D_i)\rightarrow \{C_n^+,C_n^-\}$ be any function such that ${h_i}_{|E(C_{mn^{i-1}}^+)}:E(C_{mn^{i-1}}^+)\rightarrow \{C_n^+,C_n^-\}$ assigns $C_n^-$ to exactly $r_i$ arcs of $C_{mn^{i-1}}^+$. Define $ D_{i+1}=D_i\otimes_{h_i}\{C_n^+,C_n^-\}$. By Theorem \ref{producte_uni}, we obtain that und$(D_{s+1})\cong G^{n^s}$. Thus, the first part of the statement holds.

Let $h_{s+1}: E(D_{s+1})\rightarrow \{C_n^+,C_n^-\}$ be any function such that ${h_{s+1}}_{|E(C_{mn^s}^+)}:E(C_{mn^s}^+)\rightarrow \{C_n^+,C_n^-\}$ assigns $C_n^-$ to exactly $r'_{s+1}$ arcs of $C_{mn^s}^+$.  By Theorem \ref{producte_uni}, we obtain that und$(D_{s+2})\cong nG^{n^s}$. Hence, the result holds for $l=0$. From now on, assume that $l\ge 1$. We will prove by induction on $l$ that there exists a sequence of digraphs $D_{i+1}$ and a sequence of functions $h_i: E(D_i)\rightarrow \{C_n^+,C_n^-\}$,
 $i=s+2,\ldots, l+s+1$, such that $D_{i+1}=D_i\otimes_{h_i}\{C_n^+,C_n^-\}$ and
\begin{equation}\label{formula _D}
\hbox{und}(D_{l+s+2})\cong \sum_{k=1}^{l} \left(n\sum_{t=1}^kj_t^{l-k+t}-\sum_{t=1}^{k-1}j_{t}^{l-k+t+1}\right)
G^{n^{k+s-1}}+\left(n-\sum_{k=1}^lj_k^k\right)G^{n^{l+s}},
\end{equation}
where $\{j_1^1, j_1^2,j_2^2,\ldots,
j_1^l,j_2^l,\ldots, j_l^l\}$ is a set of nonnegative integers, and we let $\sum_{t=1}^{0}\alpha (t)=0$, for any expression $\alpha (t)$ depending on $t$.

Let $h_{s+2}:E(D_{s+2})\rightarrow \{C_n^+,C_n^-\}$ be any function that assigns $C_n^-$ to exactly $r'_{s+1}$ arcs of $C_{mn^{s}}^+$ to $j_1^{1}$ weakly connected components with a cycle of length $mn^{s}$ and that assigns $C_n^-$ to exactly $r_{s+1}$ arcs of $C_{mn^s}^+$ of the remaining weakly connected components with a cycle of length $mn^{s}$. Then, by Theorem \ref{producte_uni}, we have that:
$
  \hbox{und}(D_{s+3}) \cong  nj_1^{1}G^{n^{s}}+\left(n-j_{1}^{1}\right)G^{n^{s+1}}
$. Thus, formula (\ref{formula _D}) holds for $l=1$.
Suppose now that these sequences of digraphs and functions exist for $l$. We have to prove that also exist for $l+1$.
Let $h_{l+s+2}:E(D_{l+s+2})\rightarrow \{C_n^+,C_n^-\}$ be any function that assigns (i) $C_n^-$ to exactly $r'_{s+k}$ arcs of $C_{mn^{s+k-1}}^+$ to $j_k^{l+1}$ weakly connected components with a cycle of length $mn^{s+k-1}$  and (ii) $C_n^-$ to exactly $r_{s+1}$ arcs of $C_{mn^s}^+$ of the remaining weakly connected components with a cycle of length $mn^{s+k-1}$, for $k=1,2,\ldots, l$. Then, by Theorem \ref{producte_uni} and the induction hypothesis, we have that:
\begin{eqnarray*}
% \nonumber to remove numbering (before each equation)
  \hbox{und}(D_{l+s+3}) &\cong & \sum_{k=1}^{l}nj_k^{l+1}G^{n^{k+s-1}}+ \sum_{k=1}^{l}\left(n\sum_{t=1}^kj_t^{l-k+t}-\sum_{t=1}^{k-1}j_{t}^{l-k+t+1}-j_k^{l+1}\right)G^{n^{k+s}}\\
   &+& nj_{l+1}^{l+1} G^{n^{l+s}}+\left(n-\sum_{k=1}^lj_k^k-j_{l+1}^{l+1}\right)G^{n^{l+s+1}}\\
   &=& nj_{1}^{l+1} G^{n^{s}}+ \sum_{k=2}^{l+1}nj_k^{l+1}G^{n^{k+s-1}}+ \sum_{k=2}^{l+1}\left(n\sum_{t=1}^{k-1}j_t^{l-(k-1)+t}-\sum_{t=1}^{k-1}j_{t}^{l-(k-1)+t+1}\right)G^{n^{k+s-1}}\\
   &=& \sum_{k=1}^{l+1} \left(n\sum_{t=1}^kj_t^{l+1-k+t}-\sum_{t=1}^{k-1}j_{t}^{l+1-k+t+1}\right)
G^{n^{k+s-1}}+\left(n-\sum_{k=1}^{l+1}j_k^k\right)G^{n^{l+s+1}},
\end{eqnarray*}
which proves that formula (\ref{formula _D}) also holds for $l+1$.
Now, we are ready to finish the proof. Let $(a_0^l,a_1^l,\ldots, a_l^l)$ be any sequence of nonnegative integers such that,
\begin{equation}\label{equation_a_l^l}
   a_l^l=n-\sum_{k=1}^lj_k^k\quad \hbox{and}\quad a_{k-1}^{l}=n\sum_{t=1}^kj_t^{l-k+t}-\sum_{t=1}^{k-1}j_{t}^{l-k+t+1}, \ k=1,2,\ldots, l.
\end{equation}
Then,
$$ \sum_{k=0}^la_k^l G^{n^{k+s}}\cong \hbox{und}(D_{l+s+2}).$$

An easy check shows that we can find a set of nonnegative integers $\{j_1^1, j_1^2,j_2^2,\ldots,
j_1^l,j_2^l,\ldots, j_l^l\}$ that is a solution of (\ref{equation_a_l^l}) if $$\sum_{i=0}^{l}\frac{a_{i}^l}{n^{l-i}}=n \ \hbox{and}\ \sum_{i=0}^k\frac{a_{i}^l}{n^{k-i+1}}\in \mathbb{Z}^+,\ \hbox{for each}\ k=0,1,l-1,\ \hbox{when}\ l\ge 1.$$
Therefore, the result follows.\qed
\section{Super edge-magic labelings of (di)graphs with unicyclic connected components}
Through this section, we use the term graph to mean simple graph. That is to say, the graph considered do not contain loops nor multiple edges. Let $G=(V,E)$ be a $(p,q)$-graph, that is a graph with $|V|=p$ and $|E|=q.$ Kotzig and Rosa introduced in \cite{K1} the concept of edge-magic labeling. A bijective function $f:V\cup E\longrightarrow \{i\}_{i=1}^{p+q}$ is an {\it edge-magic labeling}
of $G$ if there exists an integer $k$ such that the sum $f(x)+f(xy)+f(y)=k$ for all $ xy\in E$. A graph that admits an edge-magic labeling is called an {\it edge-magic graph}, and $k$ is called the {\it valence}, the {\it magic sum} \cite{Wa} or the {\it magic weight} \cite{BaMi} of the labeling. In 1998, Enomoto
el al. \cite{E} defined the concepts of super edge-magic graphs
and super edge-magic labelings. A {\it super edge-magic labeling} is an edge-magic labeling that satisfies the extra condition $f(V)=\{i\}_{i=1} ^{p}$. It is worthwhile mentioning that an equivalent  labeling had already appeared in the literature in
1991 under the name of {\it strongly indexable labeling} \cite{AH}.
 A graph that admits a (super) edge-magic labeling is called a {\it (super) edge-magic graph}.
Super edge-magic graphs and labelings are of great interest in the world of graph labeling since they constitute a powerful link among different types of labelings. See \cite{F2} and more recently \cite{LopMunRiu8}.

From now on, let $\mathcal{S}_n^k$ denote the set of all super edge-magic labeled digraphs with order and size equal to $n$ and magic sum $k$, where
each vertex takes the name of the label that has been assigned to it. Then, the following result can be found en \cite{LopMunRiu8}.

\begin{tm}\cite{LopMunRiu8}\label{super}
Let $D$ be a (super) edge-magic digraph and let $h:E(D)\longrightarrow \mathcal{S}_n^k$ be any function. Then $und(D\otimes _h \mathcal{S}_n^k)$ is (super) edge-magic.
\end{tm}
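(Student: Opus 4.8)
The goal is to show that $\mathrm{und}(D\otimes_h\mathcal{S}_n^k)$ is (super) edge-magic whenever $D$ is (super) edge-magic and $h:E(D)\to\mathcal{S}_n^k$. The key observation is that every element of $\mathcal{S}_n^k$ is a $1$-regular digraph on the common vertex set $V=\{1,2,\ldots,n\}$ carrying a super edge-magic labeling with the \emph{same} magic sum $k$; in particular the arc $(x,y)$ of any $F\in\mathcal{S}_n^k$ carries edge-label $2n+1-x-y$ (since $x+y+f(xy)=k$ and one checks $k=2n+1$ for a super edge-magic digraph of order and size $n$ — the vertex labels are $1,\ldots,n$ and the edge labels $n+1,\ldots,2n$). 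Thus the labeling of each $F$ is determined by its arc set. The plan is: first fix a super edge-magic labeling $g$ of $D$, with vertex labels $1,\ldots,p$ and edge labels $p+1,\ldots,p+q$, and magic sum $c$; then construct an explicit labeling of $D\otimes_h\mathcal{S}_n^k$ on the vertex set $V(D)\times V$ by a ``base-$n$'' type formula, and verify the magic condition arc by arc.

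\textbf{The construction.} For a vertex $(a,x)$ with $g(a)=\alpha\in\{1,\ldots,p\}$ and $x\in\{1,\ldots,n\}$, set $F(a,x) = n(\alpha-1)+x$; this is a bijection from $V(D)\times V$ onto $\{1,\ldots,pn\}$, so it uses exactly the first $pn$ integers on the vertices — the ``super'' condition. For an arc $((a,x),(b,y))$ of $D\otimes_h\mathcal{S}_n^k$, by definition $(a,b)\in E(D)$ and $(x,y)\in E(h(a,b))$; let $g(ab)=\gamma\in\{p+1,\ldots,p+q\}$ be the edge label in $D$, and assign to this arc the label $F\big((a,x),(b,y)\big)=n(\gamma-1)+\big(2n+1-x-y\big)$, i.e.\ $n$ copies of the ``block'' indexed by the $D$-edge-label, offset by the edge-label of $(x,y)$ inside $h(a,b)$. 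Since $\gamma$ ranges over $p+1,\ldots,p+q$ and the inner edge-label $2n+1-x-y$ ranges over $n+1,\ldots,2n$, and for fixed $\gamma$ each inner value occurs exactly once (because $h(a,b)$ is $1$-regular with a super edge-magic labeling, so its $n$ arcs carry the $n$ distinct edge-labels $n+1,\ldots,2n$), the arc labels form exactly $\{pn+1,\ldots,pn+qn\}$. Hence $F$ is a bijection $V(D\otimes_h\mathcal{S}_n^k)\cup E(D\otimes_h\mathcal{S}_n^k)\to\{1,\ldots,(p+q)n\}$ of the right shape.

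\textbf{Checking the magic sum.} Now compute, for an arc $((a,x),(b,y))$,
\[
F(a,x)+F\big((a,x),(b,y)\big)+F(b,y)
= n(\alpha-1)+x + n(\gamma-1)+(2n+1-x-y) + n(\beta-1)+y,
\]
where $\beta=g(b)$. The $x$ and $y$ terms cancel, leaving $n(\alpha+\beta+\gamma-3)+2n+1 = n(c-3)+2n+1$, a constant independent of the chosen arc because $g$ is edge-magic with sum $c=\alpha+\beta+\gamma$. So $D\otimes_h\mathcal{S}_n^k$ (and hence its underlying graph, once we know $D\otimes_h\mathcal{S}_n^k$ is an orientation of a simple graph, which follows from $1$-regularity of the $F\in\mathcal{S}_n^k$ together with simplicity of $D$) is edge-magic, with magic sum $nc-3n+2n+1=nc-n+1$; and since the vertex labels are $\{1,\ldots,pn\}$, it is super edge-magic. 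The non-super case is handled identically, dropping the requirement that $g(V(D))=\{1,\ldots,p\}$: one still orders things so that edge labels of $D$ occupy a fixed block and replays the same cancellation.

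\textbf{Main obstacle.} The only real subtlety — and the step I would be most careful about — is the bookkeeping that makes $F$ a genuine bijection onto an initial segment: one must know precisely that a super edge-magic digraph of order $n$ and size $n$ forces $k=2n+1$ with vertex labels $\{1,\ldots,n\}$ and edge labels $\{n+1,\ldots,2n\}$, and that consequently for each fixed $D$-edge the $n$ corresponding arcs of $D\otimes_h\mathcal{S}_n^k$ receive the $n$ values $n(\gamma-1)+(n+1),\ldots,n(\gamma-1)+2n$ with no collisions across different $\gamma$. Everything else is the routine cancellation above. I would also remark that this is exactly the mechanism already used implicitly in the earlier $\otimes_h$ results, now specialized to label sets rather than cycle orientations.
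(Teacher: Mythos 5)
Your overall strategy is the same as the paper's (vertices renamed by their labels, vertex $(a,x)$ labeled $n(\alpha-1)+x$, arcs labeled in blocks of $n$ indexed by the label of the corresponding arc of $D$, and the cancellation of $x$ and $y$ in the magic-sum check), but the step you yourself flag as the main obstacle contains a genuine error. The claim that every super edge-magic digraph of order and size $n$ has magic sum $k=2n+1$ is false: summing the magic condition over the $n$ arcs of a $1$-regular such digraph gives $nk=n(n+1)+\sum_{i=n+1}^{2n}i$, i.e.\ $k=(5n+3)/2$, which is never $2n+1$; and these are precisely the families the paper feeds into this theorem, since it uses $\{C_n^+,C_n^-\}\subset\mathcal{S}_n^{(5n+3)/2}$ in Theorem \ref{generalizing_McQ_tm} and Corollary \ref{SEM_magic_sums_suma_unicyclics}. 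In $\mathcal{S}_n^k$ the magic sum $k$ is an arbitrary fixed parameter and cannot be hard-coded. Consequently $2n+1-x-y$ is not the edge label of $(x,y)$ in $h(a,b)$ (that label is $k-x-y$), and for $k\neq 2n+1$ it does not run over $\{n+1,\ldots,2n\}$, so your arc labels $n(\gamma-1)+(2n+1-x-y)$ do not form the set you claim; the map $F$ fails to be a bijection onto $\{1,\ldots,(p+q)n\}$ and hence is not an edge-magic labeling at all. There is also a second, independent slip in the bookkeeping: even granting $k=2n+1$, offsets lying in $\{n+1,\ldots,2n\}$ added to $n(\gamma-1)$ with $\gamma\in\{p+1,\ldots,p+q\}$ produce the set $\{n(p+1)+1,\ldots,n(p+q+1)\}$, not $\{pn+1,\ldots,pn+qn\}$, so the "exactly the initial segment" claim is false on its own terms.

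The repair is what the paper's sketch does: keep $k$ general and let the arc offset be computed from the actual inner edge label $k-(j+j')$ of $(j,j')$ in $h(i,i')$ (the paper assigns the arc $((i,j),(i',j'))$ the label $n(e-1)+k+n-(j+j')$, an expression in $k$; normalizing the additive constant so that the offsets lie in $\{1,\ldots,n\}$, i.e.\ using $k-n-(j+j')$, makes the blocks tile the required range). Then the $n$ arcs sitting over a fixed arc of $D$ receive $n$ distinct labels for every value of $k$, the cancellation of $j$ and $j'$ still gives a constant sum, and the resulting magic sum is an expression involving $k$ (compare Proposition \ref{Pro_induced_labeling}), not $nc-n+1$ as in your computation. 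Two smaller points: elements of $\mathcal{S}_n^k$ are not assumed $1$-regular in this section, so the fact that the $n$ arcs of $h(a,b)$ carry the distinct labels $n+1,\ldots,2n$ should be justified only by $h(a,b)$ being super edge-magic of order and size $n$; and in the non-super case the correct statement is that every label of $D$, whether it names a vertex or an arc, indexes its own block of $n$ product labels, which is what makes the same formula work there --- this deserves an explicit sentence rather than ``one still orders things''.
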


The key point in the proof, see also \cite{LopMunRiu8}, is to rename the vertices of $D$ and each element of $\mathcal{S}_n^k$ after the labels of their corresponding (super) edge-magic labeling $f$ and their super edge-magic labelings respectively. Then define the labels of the product as follows:
(i) the vertex $(i,j)\in V(D\otimes_h \mathcal{S}_n^k)$ receives the label: $n(i-1)+j$ and
(ii) the arc $((i,j),(i',j'))\in E (D\otimes_h \mathcal{S}_n^k)$ receives the label: $n(e-1)+k+n-(j+j')$, where  $e$ is the label of  $(i,i')$ in $D$.
Thus, for each arc $((i,j),(i',j'))\in E (D\otimes_h \mathcal{S}_n^k)$, coming from an arc $e=(i,i')\in E(D)$ and an arc $(j,j')\in E(h(i,i'))$, the sum of labels is constant and equals to:
$
    n(i+i'+e-3)+k+n.
$
 That is, $n(\sigma_f-3)+k+n,$ where $\sigma_f$ denotes the magic sum of the labeling $f$ of $D$.
Therefore, we obtain the following proposition.

\begin{pro}%\cite{LopMunRiu7}
\label{Pro_induced_labeling}
Let $\check{f}$ be the edge-magic labeling of the graph und$(D\otimes_h \mathcal{S}_n^k)$ obtained in Theorem \ref{super} from a labeling $f$ of $D$. Then the magic sum of $\check{f}$, $\sigma_{\check{f}}$, is given by the formula

\begin{equation}\label{induced_labeling}
\sigma_{\check{f}}=n(\sigma_f-3)+n+k,
\end{equation}
where $\sigma_f$ is the magic sum of $f$.
\end{pro}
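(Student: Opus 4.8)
The plan is to read the magic sum straight off the explicit labeling constructed in the proof of Theorem \ref{super}, which is already recorded in the paragraph preceding the statement. Recall that in that construction the vertices of $D$ and the vertices of each member of $\mathcal{S}_n^k$ are renamed after the labels assigned by the relevant (super) edge-magic labelings; in particular, $f$ being edge-magic on $D$ with magic sum $\sigma_f$ means precisely that $i+i'+e=\sigma_f$ for every arc $e=(i,i')\in E(D)$.

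First I would fix an arbitrary arc of $\hbox{und}(D\otimes_h\mathcal{S}_n^k)$. By definition of the $\otimes_h$-product it has the form $((i,j),(i',j'))$ and comes from an arc $e=(i,i')\in E(D)$ together with an arc $(j,j')\in E(h(i,i'))$. Rule (i) of the construction gives its two endpoints the labels $n(i-1)+j$ and $n(i'-1)+j'$, and rule (ii) gives the arc itself the label $n(e-1)+k+n-(j+j')$. Adding these three quantities, the contributions $j$ and $j'$ cancel against $-(j+j')$, and collecting the multiples of $n$ leaves $n(i+i'+e-3)+n+k$.

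Then I would invoke the edge-magic identity $i+i'+e=\sigma_f$, which turns this edge-sum into $n(\sigma_f-3)+n+k$. Since this value is independent of the chosen arc, $\check{f}$ is indeed an edge-magic labeling and its magic sum is exactly the claimed $\sigma_{\check{f}}=n(\sigma_f-3)+n+k$, which is formula (\ref{induced_labeling}).

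There is no genuine obstacle here: the only point deserving a moment's care is the bookkeeping of the renaming convention, namely checking that the symbol $e$ appearing in rule (ii) as ``the label of $(i,i')$ in $D$'' is the same $e$ for which $i+i'+e=\sigma_f$ holds. Once that is observed, everything reduces to the one-line cancellation above, so the proof is essentially the computation already displayed before the proposition.
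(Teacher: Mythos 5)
Your computation is exactly the one the paper carries out in the paragraph preceding the proposition: summing the labels $n(i-1)+j$, $n(i'-1)+j'$ and $n(e-1)+k+n-(j+j')$, cancelling $j+j'$, and substituting $i+i'+e=\sigma_f$ to obtain $n(\sigma_f-3)+n+k$. The proposal is correct and follows the paper's own argument.
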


\begin{co}\label{valences_differ}
Let $D$ be an edge-magic digraph and assume that there exist two edge-magic labelings of $D$, $f$ and $g$, such that $\sigma_f\neq\sigma_g$. If we denote by $\check{f}$ and $\check{g}$ the edge-magic labelings of the graph und$(D\otimes_h \mathcal{S}_n^k)$ when using the edge-magic labelings $f$ and $g$ of $D$ respectively, then we get
$$|\sigma_{\check{f}}-\sigma_{\check{g}}|\ge 3.$$
\end{co}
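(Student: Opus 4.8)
The plan is to derive the inequality directly from the explicit formula for the induced magic sum established in Proposition \ref{Pro_induced_labeling}. Since both $\check{f}$ and $\check{g}$ are the labelings of $\mathrm{und}(D\otimes_h\mathcal{S}_n^k)$ produced by Theorem \ref{super} from the two labelings $f$ and $g$ of $D$, formula (\ref{induced_labeling}) applies to each of them with the same parameters $n$ and $k$. So the first step is simply to write
$$\sigma_{\check{f}}=n(\sigma_f-3)+n+k,\qquad \sigma_{\check{g}}=n(\sigma_g-3)+n+k.$$

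The second step is to subtract these two identities. The constant terms $-3n+n+k$ cancel, leaving $\sigma_{\check{f}}-\sigma_{\check{g}}=n(\sigma_f-\sigma_g)$, and hence $|\sigma_{\check{f}}-\sigma_{\check{g}}|=n\,|\sigma_f-\sigma_g|$. Because $f$ and $g$ are edge-magic labelings, their magic sums are integers, and the hypothesis $\sigma_f\neq\sigma_g$ forces $|\sigma_f-\sigma_g|\ge 1$.

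The final step is to bound $n$ from below. Here one uses that $\mathcal{S}_n^k$ consists of super edge-magic digraphs with order and size both equal to $n$; a $(p,q)$-graph admitting a super edge-magic labeling must satisfy $q\le 2p-3$, so with $p=q=n$ we get $n\le 2n-3$, i.e. $n\ge 3$. Combining, $|\sigma_{\check{f}}-\sigma_{\check{g}}|=n\,|\sigma_f-\sigma_g|\ge 3$, which is the claim.

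I do not expect any real obstacle: the statement is an immediate arithmetic consequence of Proposition \ref{Pro_induced_labeling}. The only point that needs a word of justification (and the only place the constant $3$ rather than $n$ or $1$ enters) is the inequality $n\ge 3$, which is exactly the feasibility constraint for the existence of a super edge-magic digraph in $\mathcal{S}_n^k$ with equal order and size.
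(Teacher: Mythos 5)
Your proof is correct and follows essentially the same route as the paper: apply formula (\ref{induced_labeling}) to both labelings, subtract to get $|\sigma_{\check{f}}-\sigma_{\check{g}}|=n|\sigma_f-\sigma_g|$, and use $|\sigma_f-\sigma_g|\ge 1$. Your only addition is to justify $n\ge 3$ explicitly via the super edge-magic bound $q\le 2p-3$, a point the paper's proof leaves implicit.
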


\begin{proof} Since $\sigma_f\neq\sigma_g$, we get the inequality $|\sigma_f-\sigma_g|\ge 1$.
Thus, using (\ref{induced_labeling}), we obtain that
$|\sigma_{\check{f}}-\sigma_{\check{g}}|=|n(\sigma_f- \sigma_g)|\ge 3.$
\end{proof}

Using a technique introduced in \cite{McQ01}, McQuillan proved the following. Let $n$ be a positive integer and let $G=C_{m_1}+\ldots +C_{m_l}$ be a disjoint union of cycles. Let $I=\{1,2,\ldots,l\}$ and $J$ be any subset of $I$. Finally, denote by $G_J=(\sum_{j\in J}nC_{m_j})+(\sum_{i\in I\setminus J}C_{nm_i})$.

\begin{tm}\cite{McQ02}\label{MacQ_tm}
Let $n$ be an odd positive integer and let $G=C_{m_1}+\ldots +C_{m_l}$ be a disjoint union of cycles. Consider $I=\{1,2,\ldots,l\}$ and $J$ any subset of $I$. If $G$ has an edge-magic labeling with magic constant $h$ then $G_J$ has edge-magic labelings with magic constants $3(n-1)(m_1+m_2+\ldots +m_l)+h$ and $nh-3(n-1)/2$.
\end{tm}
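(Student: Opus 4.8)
The theorem we wish to reprove is McQuillan's Theorem~\ref{MacQ_tm}: starting from an edge-magic labeling of $G=C_{m_1}+\ldots+C_{m_l}$ with magic constant $h$, one obtains edge-magic labelings of $G_J=(\sum_{j\in J}nC_{m_j})+(\sum_{i\in I\setminus J}C_{nm_i})$ with magic constants $3(n-1)(m_1+\ldots+m_l)+h$ and $nh-3(n-1)/2$. The whole point is that the machinery built in Sections~2 and~3 makes this a one-line consequence: McQuillan's $G_J$ is exactly an $\otimes_h$-product of $\overrightarrow{G}$ with a suitable family $\{C_n^+,C_n^-\}$, and Theorem~\ref{super} together with Proposition~\ref{Pro_induced_labeling} controls both the combinatorial type and the magic sum of the product. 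So the strategy is: (i) orient $G$ into $\overrightarrow{G}=C_{m_1}^+ +\ldots+C_{m_l}^+$; (ii) realize $G_J$ as $\mathrm{und}(\overrightarrow{G}\otimes_{h'}\{C_n^+,C_n^-\})$ for an appropriate edge-function $h'$; (iii) check that $\{C_n^+,C_n^-\}\subseteq\mathcal{S}_n^k$ for the right $k$ so that Theorem~\ref{super} applies; (iv) read off the two magic sums from formula~(\ref{induced_labeling}) by choosing the two relevant super edge-magic labelings of the $n$-cycle.

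\textbf{Step (ii): producing $G_J$ as a product.} By Lemma~\ref{components} it suffices to handle one component $C_{m_i}^+$ at a time. For $i\in J$ we want $n$ disjoint copies of $C_{m_i}$: apply Theorem~\ref{cicleALI} (or Theorem~\ref{producte_uni} with all trees trivial) with $r_i$ chosen so that the subgroup of $\mathbb{Z}_n$ generated by $m_i-2r_i$ is trivial, i.e.\ $m_i-2r_i\equiv 0\pmod n$; since $n$ is odd, $2$ is invertible mod $n$ and such an $r_i<m_i$ exists (take $r_i'$ as in the proof of Theorem~\ref{th_charact_decom_suma_unicyclics}). For $i\in I\setminus J$ we want a single $C_{nm_i}$: choose $r_i$ with $\langle m_i-2r_i\rangle=\mathbb{Z}_n$, i.e.\ $\gcd(m_i-2r_i,n)=1$, again possible since $n$ is odd (the value $r_i$ from the proof of Theorem~\ref{th_charact_decom_suma_unicyclics} works). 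Assembling these choices into a single $h':E(\overrightarrow G)\to\{C_n^+,C_n^-\}$ and invoking Lemma~\ref{components} gives $\mathrm{und}(\overrightarrow G\otimes_{h'}\{C_n^+,C_n^-\})\cong G_J$.

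\textbf{Steps (iii)--(iv): the magic sums.} Here I would use that $C_n$ is super edge-magic for odd $n$ (this is classical), and moreover that $C_n^+$ and $C_n^-$ admit super edge-magic labelings with the \emph{same} magic sum $k=(5n+3)/2$ (the standard labeling of the odd cycle and its reverse); hence $\{C_n^+,C_n^-\}\subseteq\mathcal{S}_n^k$ with a common $k$, which is exactly what Theorem~\ref{super} and Proposition~\ref{Pro_induced_labeling} require. Now $G$, being an edge-magic $(p,p)$-graph with $p=m_1+\ldots+m_l$ and magic sum $h$, when viewed as $\overrightarrow G$ and fed through Theorem~\ref{super} produces an edge-magic labeling $\check f$ of $\mathrm{und}(\overrightarrow G\otimes_{h'}\mathcal S_n^k)=G_J$ with, by~(\ref{induced_labeling}), $\sigma_{\check f}=n(h-3)+n+k$. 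Substituting $k=(5n+3)/2$ gives $n(h-3)+n+(5n+3)/2 = nh-3n+n+(5n+3)/2 = nh+(3-n)/2 \cdot(-1)$... more carefully, $nh-2n+(5n+3)/2 = nh + (-4n+5n+3)/2 = nh+(n+3)/2$; this is not yet McQuillan's value, so the two target constants must instead come from the \emph{two different} edge-magic labelings of $C_n$ of the two stated magic constants (one super, one merely edge-magic): applying~(\ref{induced_labeling}) with those two values of $k$, and simplifying, yields $3(n-1)p+h$ and $nh-3(n-1)/2$ respectively. \textbf{The main obstacle} is precisely this bookkeeping: identifying which pair of edge-magic labelings of the $n$-cycle have a common magic sum (so that $\{C_n^+,C_n^-\}$ lies in one $\mathcal S_n^k$) and checking that substituting their magic sums into~(\ref{induced_labeling}) reproduces McQuillan's two constants — together with verifying that Theorem~\ref{super} applies to the merely-edge-magic (non-super) case, for which one needs the non-super analogue of the construction recalled after Theorem~\ref{super}. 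Everything else is a direct citation of Lemma~\ref{components}, Theorem~\ref{cicleALI}, Theorem~\ref{super} and Proposition~\ref{Pro_induced_labeling}.
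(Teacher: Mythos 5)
There is a genuine gap, and it sits exactly where you flagged it. Your step (ii) is sound and is in fact how the paper itself proceeds in its related result (Theorem \ref{generalizing_McQ_tm}): choosing $r_i$ with $\langle m_i-2r_i\rangle$ trivial for $i\in J$ and $\langle m_i-2r_i\rangle=\mathbb{Z}_n$ for $i\in I\setminus J$ realizes $G_J$ as $\mathrm{und}(\overrightarrow{G}\otimes_{h'}\{C_n^+,C_n^-\})$. But steps (iii)--(iv) cannot deliver the two stated constants. As your own computation shows, feeding an edge-magic labeling of $G$ with constant $h$ through Theorem \ref{super} and formula (\ref{induced_labeling}) with $k=(5n+3)/2$ yields $\sigma_{\check f}=n(h-3)+n+k=nh+(n+3)/2$, which is neither $3(n-1)(m_1+\cdots+m_l)+h$ nor $nh-3(n-1)/2$. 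The escape route you propose --- use other labelings of $C_n$ with other values of $k$ --- is not available. First, $\mathcal{S}_n^k$ consists of \emph{super} edge-magic labeled digraphs, and for $C_n$ (an $(n,n)$-graph) the super edge-magic valence is forced: summing the magic equation over all edges gives $nk=2\cdot\tfrac{n(n+1)}{2}+\sum_{i=n+1}^{2n}i$, i.e.\ $k=(5n+3)/2$, so the value $k=(n+3)/2$ needed to reach $nh-3(n-1)/2$ is impossible. Second, the constant $3(n-1)(m_1+\cdots+m_l)+h$ has coefficient $1$ in $h$, whereas (\ref{induced_labeling}) always produces $n\sigma_f+\mathrm{const}$; no choice of $k$ (independent of $h$) can produce it by this route. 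Third, replacing the factors by merely edge-magic labeled copies of $C_n$ destroys the construction behind Theorem \ref{super}: the product labels $n(i-1)+j$ are a bijection precisely because the second factor's vertices carry the labels $1,\ldots,n$, and no ``non-super analogue'' is provided in the paper or in your argument. So what you actually prove is a statement of the type of Theorem \ref{generalizing_McQ_tm} (edge-magicness of $G_J$, with the single constant $nh+(n+3)/2$), not Theorem \ref{MacQ_tm}.

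It is also worth noting that the paper does not prove this statement at all: it is McQuillan's theorem, quoted from \cite{McQ02}, and McQuillan's argument is a direct manipulation of the labels of $G$ (distributing arithmetic progressions of labels over the blown-up cycles), not an $\otimes_h$-product argument. The paper explicitly says it generalizes Theorem \ref{MacQ_tm} only ``in some sense,'' precisely because the product machinery controls the magic sum through (\ref{induced_labeling}) and therefore cannot reproduce McQuillan's two particular constants. To complete a proof of the statement as written you would need either McQuillan's original construction or a genuinely new labeling argument for the two constants, not a citation of Theorem \ref{super} and Proposition \ref{Pro_induced_labeling}.
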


Using Theorems \ref{producte_uni} and \ref{super}, we generalize, in some sense, Theorem \ref{MacQ_tm}.

\begin{tm}\label{generalizing_McQ_tm}
Let $n$ be an odd positive integer and let $G=G_1+G_2+\ldots +G_l,$ where each $G_i$ is a unicyclic graph of the form $G_i=(T_1^i (w_1^i), T_2^i (w_2^i),\ldots, T_{k_i}^i (w_{m_i}^i))$. Let $I=\{1,2,\ldots,l\}$ and $J$ be any subset of $I$ such that if $m_i\in J$ is odd then $m_i\ge n$. If $G$ has a (super) edge-magic labeling then $$(\sum_{j\in J} nG_i)+(\sum_{i\in I\setminus J}G_i^n)$$ has a (super) edge-magic labeling.
\end{tm}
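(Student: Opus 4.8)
The plan is to deduce the theorem from Theorem~\ref{producte_uni}, which controls the shape of an iterated $\otimes_h$-product of a unicyclic digraph with $\{C_n^+,C_n^-\}$, together with Theorem~\ref{super}, which transports a (super) edge-magic labeling through the $\otimes_h$-product. First I would fix a (super) edge-magic labeling $f$ of $G$ and let $D=D_1+\cdots+D_l$ be the oriented graph obtained from $G$ by strongly orienting every cycle, with $\mathrm{und}(D_i)=G_i$. An edge-magic labeling of a graph is literally the same data as an edge-magic labeling of any of its orientations, since the identity $f(x)+f(xy)+f(y)=k$ is insensitive to the direction of $xy$; hence $D$ is a (super) edge-magic digraph. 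Next, since $n$ is odd the cycle $C_n$ is super edge-magic; fixing such a labeling and orienting $C_n$ strongly gives $C_n^+\in\mathcal{S}_n^k$ for the resulting magic sum $k$, and reversing all arcs (again using the symmetry of $f(x)+f(xy)+f(y)=k$) shows $C_n^-=(C_n^+)^-\in\mathcal{S}_n^k$ as well. Thus $\{C_n^+,C_n^-\}\subseteq\mathcal{S}_n^k$, and any $h\colon E(D)\to\{C_n^+,C_n^-\}$ may also be regarded as a function into $\mathcal{S}_n^k$ without changing the digraph $D\otimes_h\{C_n^+,C_n^-\}$, because the $\otimes_h$-product depends only on $h$ and not on the ambient family.

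The crux is the choice of $h$ on the cycles of the components $D_i$, where I write $m_i$ for the length of the cycle of $G_i$. For $i\in I\setminus J$ I want the subgroup of $\mathbb{Z}_n$ generated by $m_i-2r_i$ to be all of $\mathbb{Z}_n$, i.e.\ $k_i=n$ in the notation of Theorem~\ref{producte_uni}; because $n$ is odd I can take $r_i=(m_i-1)/2$ when $m_i$ is odd and $r_i=m_i/2-1$ when $m_i$ is even, so that $m_i-2r_i\in\{1,2\}$ is coprime to $n$ while $0\le r_i\le m_i$. For $j\in J$ I want that subgroup to be trivial, i.e.\ $m_j-2r_j\equiv 0\pmod n$ and hence $k_j=1$: if $m_j$ is even take $r_j=m_j/2$, and if $m_j$ is odd take $r_j=(m_j+n)/2$, which is an integer since $n$ is odd and which satisfies $r_j\le m_j$ exactly because of the hypothesis $m_j\ge n$. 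Let $h\colon E(D)\to\{C_n^+,C_n^-\}$ take these values on the cycle arcs of each $D_i$ and, say, $C_n^+$ on every tree arc.

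With this $h$, Lemma~\ref{components} and Theorem~\ref{producte_uni} give
$$\mathrm{und}\bigl(D\otimes_h\{C_n^+,C_n^-\}\bigr)\cong\sum_{i\in I\setminus J}\frac{n}{k_i}\,G_i^{k_i}+\sum_{j\in J}\frac{n}{k_j}\,G_j^{k_j}\cong\sum_{i\in I\setminus J}G_i^{n}+\sum_{j\in J}n\,G_j .$$
On the other hand, viewing $h$ as a map $E(D)\to\mathcal{S}_n^k$ and applying Theorem~\ref{super} to the (super) edge-magic digraph $D$ shows that $\mathrm{und}(D\otimes_h\mathcal{S}_n^k)$ is (super) edge-magic; by the displayed isomorphism this graph is precisely $(\sum_{j\in J}nG_j)+(\sum_{i\in I\setminus J}G_i^{n})$, as required. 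I expect the only genuinely delicate point to be the range check for $r_j$ in the case $j\in J$ with $m_j$ odd: this is exactly where both the parity of $n$ and the hypothesis $m_j\ge n$ are used, and the verification that $C_n^\pm\in\mathcal{S}_n^k$ is the other place where the oddness of $n$ enters. The rest is bookkeeping on top of the quoted results, and specializing to trivial trees recovers the set-up $G_J$ of Theorem~\ref{MacQ_tm}.
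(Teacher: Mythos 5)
Your proposal is correct and follows essentially the same route as the paper: the same choices of $r_i$ (with $k_i=n$ for $i\in I\setminus J$ and $k_i=1$ for $i\in J$, the hypothesis $m_i\ge n$ covering the odd case), then Lemma \ref{components} with Theorem \ref{producte_uni} to identify $\mathrm{und}(D\otimes_h\{C_n^+,C_n^-\})$, and finally Theorem \ref{super} via $\{C_n^+,C_n^-\}\subset\mathcal{S}_n^{(5n+3)/2}$. Your added remarks (the range check for $r_j$ and the verification that $C_n^\pm$ are super edge-magic for odd $n$) only make explicit details the paper leaves implicit.
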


\proof Let $j\in J$. If $m_i$ is even we define $r_i=m_i/2$. Otherwise, we define $r_i=(m_i+n)/2$. Thus, the
subgroup of $\mathbb{Z}_n$ generated by $m_i-2r_i$ is the trivial subgroup. Hence, inheriting the notation introduced in Theorem \ref{producte_uni}, und$(D_i\otimes_{h_i}\{C_n^+,C_n^-\})\cong nG_i$.
Now, let $i\in I\setminus J$. If $m_i$ is odd we define $r_i=(m_i-1)/2$. Otherwise, we define $r_i=m_i/2-1$. In both cases, the subgroup of $\mathbb{Z}_n$ generated by $m_i-2r_i$ is $\mathbb{Z}_n$. Thus, inheriting the notation introduced in Theorem \ref{producte_uni}, und$(D_i\otimes_{h_i}\{C_n^+,C_n^-\})\cong G_i^n$. Hence, by considering the function $h:E(D)\rightarrow \{C_n^+,C_n^-\}$ defined by $h(e)=h_i(e)$, for each $e\in D_i$ and $i=1,2,\ldots, l$, by Lemma \ref{components} we obtain that und$(D\otimes_h\{C_n^+,C_n^-\})\cong(\sum_{j\in J} nG_i)+(\sum_{i\in I\setminus J}G_i^n)$. Therefore, since $\{C_n^+,C_n^-\}\subset \mathcal{S}_n^{(5n+3)/2}$ the  result follows by Theorem \ref{super}.
\qed

Let $f:V(G)\cup E(G)\rightarrow \{i\}_{i=1}^{p+q}$ be a super edge-magic labeling of a $(p,q)$-graph $G$, with $p=q$. The {\it odd labeling} and {\it even labeling} obtained from $f$, denoted respectively by $o(f)$ and $e(f)$, are the labelings $o(f), e(f):V(G)\cup E(G)\rightarrow \{i\}_{i=1}^{p+q}$ defined as follows:
(i) on the vertices:
 $o(f)(x)=2f(x)-1$ and $e(f)(x)=2f(x)$, for all $x\in V(G)$,
(ii) on the edges:
 $o(f)(xy)=2\hbox{val}(f)-2p-2-o(f)(x)-o(f)(y)$ and
      $ e(f)(xy)=2\hbox{val}(f)-2p-1-e(f)(x)-e(f)(y)$, for all $xy\in E(G).$

\begin{lm}[\cite{LopMunRiu6}]\label{odd_even_labelings}
Let $G$ be a $(p,q)$-graph with $p=q$ and let $f:V(G)\cup E(G)\rightarrow \{i\}_{i=1}^{p+q}$ be a super edge-magic labeling of $G$. Then, the odd labeling $o(f)$ and the even labeling $e(f)$ obtained from $f$ are edge-magic labelings of $G$ with magic sums $\hbox{val}(o(f))=2\hbox{val}(f)-2p-2$ and $\hbox{val}(e(f))=2\hbox{val}(f)-2p-1$.
\end{lm}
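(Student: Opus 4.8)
We are given a $(p,q)$-graph $G$ with $p=q$ and a super edge-magic labeling $f:V(G)\cup E(G)\to\{i\}_{i=1}^{p+q}$, and we must show that the induced labelings $o(f)$ and $e(f)$ are edge-magic labelings of $G$ with the claimed magic sums.

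The plan is to verify directly that $o(f)$ and $e(f)$ are bijections onto $\{1,\dots,p+q\}$ and that the edge-sum is constant. First I would record the structural facts that make a super edge-magic labeling special: since $p=q$, we have $p+q=2p$, the vertex labels $f(V)$ are exactly $\{1,\dots,p\}$, and the edge labels $f(E)$ are exactly $\{p+1,\dots,2p\}$. Hence on vertices $o(f)(x)=2f(x)-1$ runs bijectively over the odd numbers $\{1,3,\dots,2p-1\}$ and $e(f)(x)=2f(x)$ runs bijectively over the even numbers $\{2,4,\dots,2p\}$. For $o(f)$ on edges, $o(f)(xy)=2\,\mathrm{val}(f)-2p-2-o(f)(x)-o(f)(y)$; since $o(f)(x)$ and $o(f)(y)$ are both odd, their sum is even, so $o(f)(xy)$ is even — I would then check it lands bijectively in $\{2,4,\dots,2p\}$ by noting that the map $e\mapsto$ (this value) is affine in the original $f(e)$, hence injective, and a counting/range argument pins down the image. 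Symmetrically, for $e(f)$ the edge label $e(f)(xy)=2\,\mathrm{val}(f)-2p-1-e(f)(x)-e(f)(y)$ is odd (even minus odd), and lands bijectively in $\{1,3,\dots,2p-1\}$. Combining vertices and edges, $o(f)$ uses each odd number once on vertices and each even number once on edges, so it is a bijection onto $\{1,\dots,2p\}$; likewise $e(f)$.

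Next I would check the magic property, which is immediate by design. For any edge $xy$,
\[
o(f)(x)+o(f)(xy)+o(f)(y)=o(f)(x)+\bigl(2\,\mathrm{val}(f)-2p-2-o(f)(x)-o(f)(y)\bigr)+o(f)(y)=2\,\mathrm{val}(f)-2p-2,
\]
which is independent of the edge; so $o(f)$ is edge-magic with $\mathrm{val}(o(f))=2\,\mathrm{val}(f)-2p-2$. The identical computation with the constant $2\,\mathrm{val}(f)-2p-1$ gives $\mathrm{val}(e(f))=2\,\mathrm{val}(f)-2p-1$. The only genuine point requiring care — the ``main obstacle,'' though it is mild — is confirming that the edge labels actually form the correct residue class and exhaust it without collision; I would handle this by observing that edge labels in a super edge-magic labeling are determined up to the affine bijection $e\mapsto c-f(x)-f(y)$, so the induced edge labels inherit injectivity, and a parity plus range count (they are $p$ numbers, all of one parity, lying in $[1,2p]$) forces them to be precisely the $p$ numbers of that parity. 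This completes the verification that $o(f)$ and $e(f)$ are edge-magic labelings with the stated magic sums.
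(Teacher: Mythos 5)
Your verification is correct. Note that the paper itself gives no proof of this lemma: it is imported verbatim from \cite{LopMunRiu6}, so there is nothing to compare against beyond the statement. Your argument is the natural one, and the only place where you are slightly roundabout --- the ``parity plus range count'' pinning down the edge labels --- can be closed in one line by substituting the super edge-magic relation $f(x)+f(xy)+f(y)=\mathrm{val}(f)$ into the defining formulas: this gives $o(f)(xy)=2f(xy)-2p$ and $e(f)(xy)=2f(xy)-2p-1$, and since $f(E)=\{p+1,\ldots,2p\}$ these run bijectively over $\{2,4,\ldots,2p\}$ and $\{1,3,\ldots,2p-1\}$ respectively, which together with your vertex computation gives the bijectivity onto $\{1,\ldots,p+q\}$ at once; the constancy of the edge sums is, as you say, immediate from the definitions.
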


Following an idea that appeared in \cite{LopMunRiu7}, we obtain the next result.

\begin{tm}\label{versio_complerta_i_magic_sums}
Let $D$ be any super edge-magic labeled digraph of order and size equal to $p$ and assume that the vertices take the name of their labels. Consider a set of functions $h_i$, where $h_i: E(D_i)\rightarrow \mathcal{S}_n^k$ and $D_i$ is defined recursively as follows: $D_1=D$ and $D_{i+1}=D_i\otimes_{h_i}\mathcal{S}_n^k$, for each $i\ge 2$. Then, $D_i$ admits at least $i+1$ edge-magic labelings with $i+1$ distinct edge-magic sums.
\end{tm}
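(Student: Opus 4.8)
The plan is to combine the $\otimes_h$-construction of Theorem~\ref{super} with the parity trick encoded in Lemma~\ref{odd_even_labelings}, iterated along the tower $D_1\subseteq D_2\subseteq\cdots$. First I would fix the magic sum $k$ and observe that, by Proposition~\ref{Pro_induced_labeling}, each application of $\otimes_h\mathcal{S}_n^k$ transforms a magic sum $\sigma$ into $n(\sigma-3)+n+k$, an affine (hence injective) map on $\mathbb{Z}$. Consequently, distinct magic sums for $D_i$ produce distinct magic sums for $D_{i+1}$; so it suffices to produce, for each $i$, at least $i+1$ edge-magic labelings of $D_i$ with pairwise distinct magic sums, and then the affine map carries the whole list forward without collisions. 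The base case $i=1$: $D=D_1$ is super edge-magic, and by Lemma~\ref{odd_even_labelings} (applicable since $D$ has order equal to size) its super edge-magic labeling $f$ spawns two further edge-magic labelings $o(f)$ and $e(f)$ with magic sums $2\mathrm{val}(f)-2p-2$ and $2\mathrm{val}(f)-2p-1$, which differ from each other and (generically) from $\mathrm{val}(f)$; this gives three labelings with three distinct sums, i.e.\ $i+1=2$\,\dots\ and indeed one can already squeeze out more, which is where the inductive step comes from.

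The inductive step is the heart of the argument. Suppose $D_i$ has edge-magic labelings $f_1,\dots,f_{i+1}$ with distinct magic sums. Each $f_j$ is an edge-magic labeling of $\mathrm{und}(D_i)$; pushing it through $\otimes_{h_i}\mathcal{S}_n^k$ via Theorem~\ref{super} and Proposition~\ref{Pro_induced_labeling} yields edge-magic labelings $\check{f_1},\dots,\check{f_{i+1}}$ of $D_{i+1}=D_i\otimes_{h_i}\mathcal{S}_n^k$, still with distinct sums (by injectivity of the affine map). This gives $i+1$ labelings of $D_{i+1}$; to reach $i+2$ I would extract one genuinely new magic sum. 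The natural source is the odd/even trick again: $D_{i+1}$ has order equal to size (this is preserved by the product since both factors do), so if the product-labeling arising from the \emph{super} edge-magic labeling of $D$ at the bottom is itself super edge-magic — which it is, by the ``super'' half of Theorem~\ref{super} — then Lemma~\ref{odd_even_labelings} applies to $D_{i+1}$ and furnishes two labelings $o(\cdot),e(\cdot)$ whose magic sums $2\mathrm{val}-2P-2$ and $2\mathrm{val}-2P-1$ have opposite parities, hence at least one of them differs from all $i+1$ sums already on our list (in fact one shows both are new, by a parity/size count against the explicit formula~\eqref{induced_labeling}). That produces the $(i+2)$-nd labeling with a new magic sum.

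The step I expect to be the main obstacle is verifying that the newly manufactured sum is genuinely \emph{distinct} from the $i+1$ sums already present, rather than just proving distinctness among the pushed-forward ones. The affine map keeps old sums apart but says nothing about how the odd/even labelings interact with them; one needs a concrete comparison. I would handle this by tracking the arithmetic progression structure: the sums coming from the tower grow like $n^{i}\sigma+(\text{lower order})$, so their spread is controlled, and one chooses the order $n$ (or uses the freedom in picking $h_i$, or a mild size estimate $P = n^{i-1}p$ versus $\mathrm{val}$) to force $2\mathrm{val}(\check f)-2P-1$ or $-2$ to fall outside the finite set of previously obtained values. A clean way is induction on the \emph{structure of the set} of magic sums: carry the stronger hypothesis that the $i+1$ sums all lie in a prescribed residue class mod something, or simply that they are consecutive-ish, so that a parity argument isolates the new one. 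This bookkeeping — rather than any deep new idea — is where the real work lies; everything else is a direct invocation of Theorem~\ref{super}, Proposition~\ref{Pro_induced_labeling}, and Lemma~\ref{odd_even_labelings}.
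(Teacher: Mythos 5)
Your skeleton is the same as the paper's: induct on $i$, push the $i+1$ labelings of $D_i$ through $\otimes_{h_i}\mathcal{S}_n^k$ via Theorem~\ref{super} and Proposition~\ref{Pro_induced_labeling}, note that order equals size is preserved so that $D_{i+1}$ is again super edge-magic, and then use Lemma~\ref{odd_even_labelings} on $D_{i+1}$ to manufacture one additional magic sum. However, at the step you yourself flag as ``the main obstacle'' --- showing that at least one of the two odd/even sums is distinct from the $i+1$ induced sums --- your proposal has a genuine gap: you only extract injectivity from the affine map $\sigma\mapsto n(\sigma-3)+n+k$ and then propose unexecuted and unnecessarily heavy workarounds (growth estimates of order $n^{i}\sigma$, choices of $n$, residue-class bookkeeping, a strengthened induction hypothesis), plus an unjustified stronger claim that \emph{both} new sums are distinct from the old ones. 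The point you miss is that the affine map has slope $n\ge 3$, i.e.\ Corollary~\ref{valences_differ}: any two induced sums differ by at least $3$. Hence the set of induced sums contains no two consecutive integers, while the odd and even labelings of Lemma~\ref{odd_even_labelings} have \emph{consecutive} magic sums $2\mathrm{val}(f)-2p-2$ and $2\mathrm{val}(f)-2p-1$; therefore at most one of these two values can collide with an induced sum, and at least one is new. That one-line spacing argument is exactly how the paper closes the induction, and without it (or a completed substitute) your inductive step is not proved.

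A minor further point: in the base case the claim that $\mathrm{val}(f)$ differs ``generically'' from the odd/even sums is neither justified nor needed; for $i=1$ the two consecutive sums $2\mathrm{val}(f)-2p-2$ and $2\mathrm{val}(f)-2p-1$ already supply the required two distinct magic sums, which is all the paper uses.
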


\proof
We will prove the result by induction on $i$. First of all, notice that, by Theorem \ref{super} each digraph $D_i$ is super edge-magic, for $i\ge 2$. Also, notice that by Lemma \ref{odd_even_labelings} any super edge-magic labeled digraph of order and size equal to $p$ admits two edge-magic labelings with consecutive magic sums.
Thus, $D_1=D$ has at least $2$ edge-magic labelings with $2$ distinct edge-magic sums. Suppose now that $D_i$ admits at least $i+1$ edge-magic labelings with $i$ distinct edge-magic sums. By Corollary \ref{valences_differ}, the induced edge-magic labelings of $D_{i+1}$ (see the paragraph just before Proposition \ref{Pro_induced_labeling}),  differ at least by three units. Note that, by
definition of the $\otimes_h$-product, we have that $|V(D_{i+1})|=n|V(D_i)|$ and $|E(D_{i+1})|=n|E(D_i)|$, that is, equality $|V(D_i)|=|E(D_i)|$ holds for every $i\ge 1$. Hence, Lemma \ref{odd_even_labelings} implies that $D_{i+1}$ admits two edge-magic labelings with consecutive magic sums. Therefore, at least one of them is different from the ones induced by the $h_i$-product of $D_i$ with $\mathcal{S}_n^k$. That is, $D_{i+1}$ admits at least $i+1$ edge-magic labelings with $i+1$ distinct edge-magic sums.\qed

\begin{co}\label{SEM_magic_sums_suma_unicyclics}
Let $l,m,n$ and $s$ be nonnegative integers, such that, $m\ge 1$, $n\ge 3$ is odd and if $m$ is odd then $m\ge n$. Let $G=(T_1(w_1),T_2(w_2),\ldots, T_m(w_m))$ be a unicyclic graph that is super edge-magic. Let  $(a_0^l,a_1^l,\ldots, a_l^l)$ be a sequence of positive integers such that, $a_0^0=n$ when $l=0$,
$$\sum_{i=0}^l\frac{a_i^l}{n^{l-i}}=n \quad \hbox{and}\quad \sum_{i=0}^k\frac{a_i^l}{n^{k-i}}\in \mathbb{Z}^+,\ \hbox{for each}\ k=0,l-1, \ \hbox{when} \ l\ge 1.$$
Then, the graph
$\sum_{i=0}^la_i^l G^{n^{s+i}}$ is super edge-magic. Moreover, there exist at least $l+s+1$ diferent edge-magic labelings with at least $l+s+1$ different magic sums.
\end{co}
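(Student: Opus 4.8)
The plan is to realize the graph $\sum_{i=0}^l a_i^l G^{n^{s+i}}$ as the underlying graph of the last digraph in the chain of $\otimes_h$-products produced by Theorem~\ref{th_charact_decom_suma_unicyclics}, and then to feed that same chain into Theorems~\ref{super} and~\ref{versio_complerta_i_magic_sums}. First I would invoke Theorem~\ref{th_charact_decom_suma_unicyclics}: since the hypotheses on $l$, $m$, $n$ and on the normalization and integrality of the sequence $(a_0^l,\dots,a_l^l)$ are in force here, it yields an orientation $D_1$ of $G$, functions $h_i\colon E(D_i)\to\{C_n^+,C_n^-\}$ and digraphs $D_{i+1}=D_i\otimes_{h_i}\{C_n^+,C_n^-\}$ for $i=1,\dots,l+s+1$, with $\hbox{und}(D_{l+s+2})\cong\sum_{i=0}^l a_i^l G^{n^{s+i}}$.

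Next I would enlarge the codomain family from $\{C_n^+,C_n^-\}$ to $\mathcal{S}_n^{(5n+3)/2}$. Because $n$ is odd, $C_n$ is super edge-magic, and, exactly as in the proof of Theorem~\ref{generalizing_McQ_tm}, $\{C_n^+,C_n^-\}\subset\mathcal{S}_n^{(5n+3)/2}$ once the vertices of $C_n$ are named after a fixed super edge-magic labeling; moreover $D_i\otimes_{h_i}\Gamma$ depends only on $h_i$ and not on the ambient family containing its image, so the same chain reads $D_{i+1}=D_i\otimes_{h_i}\mathcal{S}_n^{(5n+3)/2}$. Since $G$ is super edge-magic and unicyclic, $D_1$ is a super edge-magic digraph of order equal to its size, so Theorem~\ref{super} applies at every step and, by induction along the chain, each $D_i$ is super edge-magic; in particular $\hbox{und}(D_{l+s+2})\cong\sum_{i=0}^l a_i^l G^{n^{s+i}}$ is super edge-magic, which is the first assertion.

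For the count of magic sums I would apply Theorem~\ref{versio_complerta_i_magic_sums} directly to the chain $D_1,D_2,\dots,D_{l+s+2}$ with $k=(5n+3)/2$ (the hypothesis $|V(D_1)|=|E(D_1)|$ holds because a unicyclic graph has equally many vertices and edges). It guarantees that $D_{l+s+2}$ carries at least $l+s+1$ edge-magic labelings realizing $l+s+1$ pairwise distinct magic sums, and transporting these labelings across the isomorphism $\hbox{und}(D_{l+s+2})\cong\sum_{i=0}^l a_i^l G^{n^{s+i}}$ completes the proof.

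The step that needs genuine care, as opposed to simply quoting earlier results, is the very first one: checking that the normalization $\sum_{i=0}^l a_i^l/n^{l-i}=n$ together with the integrality conditions imposed here on the partial sums of the $a_i^l$ match, after the reindexing forced by the exponents $n^{s+i}$, the hypotheses under which Theorem~\ref{th_charact_decom_suma_unicyclics} produces the displayed decomposition, as well as dealing with the degenerate cases $l=0$ (where $a_0^0=n$ and the chain has length $s+2$) and $s=0$. Once this bookkeeping is settled, the conclusion is a straightforward concatenation of Theorems~\ref{th_charact_decom_suma_unicyclics}, \ref{super} and~\ref{versio_complerta_i_magic_sums}.
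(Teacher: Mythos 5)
Your proposal follows essentially the same route as the paper: invoke Theorem \ref{th_charact_decom_suma_unicyclics} to realize $\sum_{i=0}^l a_i^l G^{n^{s+i}}$ as the underlying graph of the last digraph in a chain of $\otimes_{h_i}$-products with $\{C_n^+,C_n^-\}$, note that $\{C_n^+,C_n^-\}\subset\mathcal{S}_n^{(5n+3)/2}$, and conclude via Theorems \ref{super} and \ref{versio_complerta_i_magic_sums}. The argument is correct and matches the paper's proof (up to a harmless shift in the chain indexing, which does not affect the bound of at least $l+s+1$ distinct magic sums).
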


\proof By Theorem \ref{th_charact_decom_suma_unicyclics}, there exists a sequence of functions $h_i: E(D_i)\rightarrow \{C_n^+,C_n^-\}$, $i=1,2,\ldots, l+s$, where $D_1$ is an oriented graph obtained from $G$ by considering a strong orientation of its cycle and the digraphs $D_i$ are defined recursively as follows: $D_{i+1}=D_i\otimes_{h_i}\{C_n^+,C_n^-\}$, for each $i\in \{2,\ldots,l+s\}$, such that
$\sum_{i=0}^la_i^l G^{n^{s+i}}\cong \hbox{und}(D_{l+s+1}).$ Thus, by Theorem \ref{versio_complerta_i_magic_sums}, since $\{C_n^+,C_n^-\}\subset \mathcal{S}_n^{(5n+3)/2}$ the result follows.\qed

\noindent {\bf Acknowledgements} The research conducted in this document by the first author has been supported by the Spanish Research Council under project
MTM2011-28800-C02-01 and  by the Catalan Research Council
under grant 2009SGR1387.


\begin{thebibliography}{xxxxx}
\bibitem{AH} B.D. Acharya and S.M. Hegde, Strongly indexable graphs, \emph{Discrete Math.}, {\bf 93} (1991) 123--129.
\bibitem{AMR} A. Ahmad, F. A. Muntaner-Batle, M. Rius-Font, On the product $\overrightarrow{C}_m \otimes_h \{{\overrightarrow{C}_n, \overleftarrow{C}_n }\}$ and other related topics, {\it Ars Combin.} in press.
\bibitem{BaMi} M. Ba$\check{c}$a and M. Miller, Super Edge-Antimagic Graphs,  BrownWalker
Press, Boca Raton, 2008.
%\bibitem{BreKlaRal} B. Bre$\check{s}$ar, S. Klav$\check{z}$ar and D. F. Rall, Dominating direct products of graphs, \emph{Discrete Math.} {\bf 307} (2007), 1636--1642.
%\bibitem{CH} G. Chartrand and L. Lesniak, \emph{Graphs and Digraphs},
%second edition. Wadsworth \& Brooks/Cole Advanced Books and
%Software, Monterey (1986).
\bibitem{McQ01} D. McQuillan, Edge-magic and vertex-magic total labelings of certain cycles, {\it Ars Combin.}, {\bf 90} (2009) 257--266.
\bibitem{McQ02} D. McQuillan, A technique for constructing magic labelings of $2$-regular graphs, {\it J. Combin. Math. Combin. Comput.}, {\bf 75} (2010) 129--135.
\bibitem {E}H. Enomoto, A. Llad\'{o}, T. Nakamigawa and G. Ringel, Super edge-magic graphs, \emph{ SUT J. Math}. {\bf 34} (1998), 105--109.
\bibitem {F2} R.M. Figueroa-Centeno, R. Ichishima and F.A. Muntaner-Batle, The place of super edge-magic labelings among other classes of labelings, \emph{Discrete Math.} {\bf 231} (1--3) (2001), 153--168.
%\bibitem{FIM02}R.M. Figueroa-Centeno, R. Ichishima and F.A. Muntaner-Batle, Magical coronations of graphs, \textit{Australasian J. Combin.} \textbf{ 26} (2002) 199--208.
%\bibitem {FIM05b}R. M. Figueroa-Centeno, R. Ichishima and F. A. Muntaner-Batle.\newblock On super edge-magic labelings of certain disjoint unions of graphs. \newblock  \emph{Australas. J. Combin.} {\bf 32} (2005), 225--242.
%\bibitem{F3} R.M Figueroa -Centeno, R. Ichishima and F.A. Muntaner-Batle,  On super edge-magic graphs, \emph{Ars Combin.} {\bf 64} (2002), 81--95.
%\bibitem {FIMO}R.M. Figueroa-Centeno, R. Ichishima, F.A. Muntaner-Batle
%and A. Oshima, A magical approach to some labeling conjectures, \emph{Discuss. Math. Graph Theory} {\bf 311} (2011), 79--113.
\bibitem {F1}R.M. Figueroa-Centeno, R. Ichishima, F.A. Muntaner-Batle
and M. Rius-Font, Labeling generating matrices, \emph{J. Comb. Math. and Comb. Comput.} {\bf 67} (2008), 189--216.
\bibitem{HamImrKlav11} R. Hammarck, W. Imrich and S. Klav$\check{z}$ar, Handbook of Product Graphs, Second
Edition, CRC Press, Boca Raton, FL, 2011.
\bibitem {K1}A. Kotzig and A. Rosa, Magic valuations of finite graphs,
\emph{Canad. Math. Bull.} {\bf 13} (1970), 451--461.
\bibitem{LopMun13a} S.C. L\'opez and F.A. Muntaner-Batle, Connectivity and other invariants of generalized products,
{\it Periodica Math. Hungarica}, in press.	 % arXiv:1305.2729 [math.CO].
\bibitem{LopMunRiu6} S. C. L\'opez, F. A. Muntaner-Batle, M. Rius-Font, Perfect edge-magic graphs, {\it Bull. Math. Soc. Sci. Math. Roumanie.}, in press.
\bibitem{LopMunRiu7} S. C. L\'opez, F. A. Muntaner-Batle, M. Rius-Font, A problem on edge-magic labelings of cycles, {\it Canad. Math. Bull.} (2013), DOI:10.4153/CMB-2013-036-1.
\bibitem{LopMunRiu8} S. C. L\'opez, F. A. Muntaner-Batle, M. Rius-Font, Labeling constructions using digraphs products, {\it Discrete Applied Math.} {\bf 161} (2013), 3005-3016,
\bibitem{Wa} A.M. Marr and W.D. Wallis, \emph{Magic Graphs}. Second Edition. Birkha\"{u}ser, New York, 2013.
\bibitem{RobTes05} F.S. Roberts and B. Tesman, Applied Combinatorics. Upper Saddle River, NJ: Pearson Education Inc. (2005).
\bibitem {W} D.B. West. Introduction to graph theory. Prentice Hall, INC. Simon \& Schuster, A Viacom Company upper Saddle River, NJ07458,
1996.
\end{thebibliography}
\end{document}